\definecolor{linkred}{RGB}{199,21,133}
\definecolor{linkblue}{RGB}{16, 78, 139}
	\titlespacing{\section}{0pt}{12pt}{0pt}
	\titlespacing{\subsection}{0pt}{6pt}{0pt}
\long\def\@footnotetext#1{%
\H@@footnotetext{%
\ifHy@nesting 
\hyper@@anchor{\@currentHref}{#1}%
\else 
\Hy@raisedlink{\hyper@@anchor{\@currentHref}{\relax}}#1%
\fi 
}}
\def\@footnotemark{%
\leavevmode 
\ifhmode\edef\@x@sf{\the\spacefactor}\nobreak\fi 
\H@refstepcounter{Hfootnote}%
\hyper@makecurrent{Hfootnote}%
\hyper@linkstart{link}{\@currentHref}%
\@makefnmark 
\hyper@linkend 
\ifhmode\spacefactor\@x@sf\fi 
\relax 
}%
\renewcommand*\@footnotemark{%
\leavevmode 
\ifhmode 
\edef\@x@sf{\the\spacefactor}%
\FN@mf@check 
\nobreak 
\fi 
\H@refstepcounter{Hfootnote}%
\hyper@makecurrent{Hfootnote}%
\hyper@linkstart{link}{\@currentHref}%
\@makefnmark 
\hyper@linkend 
\ifFN@pp@towrite 
\FN@pp@writetemp 
\FN@pp@towritefalse 
\fi 
\FN@mf@prepare 
\ifhmode\spacefactor\@x@sf\fi 
\relax%
}%
\newtheorem{thm}{Theorem}
\newtheorem{coro}[thm]{Corollary}
\newtheorem{lem}[thm]{Lemma}
\newtheorem{prop}[thm]{Proposition}
\theoremstyle{definition}
\newtheorem{eg}[thm]{Example}
\theoremstyle{remark}
\newcommand{\Hess}{{\rm Hess}}
\newcommand{\oP}{\overline{P}}
\newcommand{\oU}{\overline{U}}
\newcommand{\ocrit}{\overline{{\rm crit}}}
\newcommand{\crit}{{\rm crit}}
\newcommand{\sing}{{\rm sing}}
\newcommand{\dist}{{\rm dist}}
\newcommand{\tu}{\widetilde{u}}
\newcommand{\Rbb}{ {\mathbb R}}
\newcommand{\Zbb}{ {\mathbb Z}}
\newcommand{\Nbb}{ {\mathbb N}}
\newcommand{\Cbb}{ {\mathbb C}}
\renewcommand{\phi}{\varphi}
\definecolor{blue(ncs)}{rgb}{0.0, 0.53, 0.74}
\definecolor{violet}{rgb}{0.56, 0.0, 1.0}
\definecolor{olive}{rgb}{0.5, 0.5, 0.0}
\long\def\symbolfootnote[#1]#2{\begingroup%
\def\thefootnote{\fnsymbol{footnote}}\footnote[#1]{#2}\endgroup}
\def\blfootnote{\xdef\@thefnmark{}\@footnotetext}
\date{\today}
\begin{document}

{\Large \bfseries 
Some remarks on critical sets of Laplace eigenfunctions}

{\large 
Chris Judge\symbolfootnote[2]{
Research partially supported by a Simons collaboration grant.} 
and Sugata Mondal\symbolfootnote[1]{
partially supported by Ramanujan Fellowship of SERB, Govt. of India. \vspace{.1cm} \\
{\em 2010 Mathematics Subject Classification:} Primary: 35P99. Secondary: 58J50. \\
{\em Key words and phrases:} Laplacian, eigenfunction, critical point.}
}

\vspace{0.5cm}

{\bf Abstract.} We study the set of critical points of a solution to 
$\Delta u = \lambda \cdot u$ and in particular components of the critical
set that have codimension 1. We show, for example, 
that if a second Neumann eigenfunction of a simply
connected polygon $P$ has infinitely many critical points, 
then $P$ is a rectangle. 

\vspace{1cm}

\section{Introduction}

Let $u: \Omega \to \Rbb$ satisfy $\Delta u = \lambda \cdot u$ where 
$\Omega \subset \Rbb^d$ is an open set, $\Delta$ is the Laplacian, 
and $\lambda \in \Rbb$. In this note, we study the critical set 
$\crit(u):=|\nabla u|^{-1}(0)$ of $u$ and in particular the consequences of 
having a hypersurface contained in $\crit(u)$. 
For example, we show that if a connected component $A$ of $\crit(u)$
contains a hypersurface, then $A$ is a proper smooth hypersurface
(Theorem \ref{thm:crit-structure-higher}), and we show that 
if $A$ is contained in a hyperplane or sphere, then $u$ depends only on the 
distance to the hyperplane or sphere (Theorem \ref{thm:invariant}).
We then derive consequences for second Neumann eigenfunctions 
on simply connected domains. For example, in \S \ref{sec:2nd-Neumann}
we prove the following:

\begin{thm}[Compare Theorem \ref{thm:rectangle-interior} below]
\label{thm:rectangle}
Let $P \subset \Rbb^2$ be a bounded, simply connected, polygonal domain, 
and let $u: P \to \Rbb$ be a second Neumann eigenfunction of the Laplacian. 
If the set of critical points lying in $\oP$ is not finite, 
then $P$ is a rectangle and $u$ is a multiple of 
$x \mapsto \cos(\sqrt{\lambda} \cdot \dist(x,e))$ where $e$
is a side of $P$.
\end{thm}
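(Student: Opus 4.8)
The plan is to combine the structural results already announced (Theorems \ref{thm:crit-structure-higher} and \ref{thm:invariant}) with the classical Courant-type/nodal information about second Neumann eigenfunctions. First I would use the hypothesis that $\crit(u) \cap \oP$ is infinite: since $\crit(u)$ is a (real) analytic variety in the interior — $u$ being analytic as a solution of $\Delta u = \lambda u$ — an infinite critical set forces, by the structure theory for analytic varieties, that some connected component $A$ of $\crit(u)$ is not zero-dimensional, hence contains a real-analytic arc, i.e.\ a hypersurface in the $d=2$ setting. (The boundary case, where the accumulation happens on $\partial P$, needs a separate argument: near a side, $u$ extends analytically by reflection across the side, so critical points accumulating on an edge again produce a one-dimensional critical component after reflection; near a vertex one uses that only finitely many critical points can accumulate at an isolated singular boundary point unless $u$ is locally trivial.) Then Theorem \ref{thm:crit-structure-higher} upgrades $A$ to a \emph{proper smooth} hypersurface (a smooth curve) $\Gamma \subset P$.

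Next I would identify $\Gamma$. The key extra input for a \emph{second} Neumann eigenfunction is that its nodal set is a single arc meeting $\partial P$ in exactly two points and dividing $P$ into exactly two nodal domains (this is the Neumann analogue of Courant's theorem together with the known non-existence of closed nodal loops / interior nodal points for the second eigenfunction on simply connected planar domains). On the curve $\Gamma$ the normal derivative of $u$ vanishes, so $\Gamma$ is a free-boundary-type curve for $u$; restricting $u$ to either side and using the eigenvalue equation plus $\partial_\nu u = 0$ along $\Gamma$ shows $u|_\Gamma$ is constant and in fact $\Gamma$ cannot cross the nodal line, so $\Gamma$ lies in one nodal domain where $u$ has a fixed sign and where $u$ has an interior extremum along all of $\Gamma$. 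A maximum-principle / Hopf-type argument then shows $\Gamma$ must be a level set $\{u = c\}$ realizing the maximum of $|u|$, and moreover a \emph{flat} curve: one computes that along $\Gamma$ both $\nabla u = 0$ and the curvature of the level sets are controlled by $\Delta u = \lambda u$, forcing $\Gamma$ to be a line segment or a circular arc. At this point Theorem \ref{thm:invariant} applies: $u$ depends only on $\dist(\cdot,\Gamma)$, so $u(x) = f(\dist(x,L))$ for a line $L$ (the sphere case is excluded because a circle cannot be a level set of a Neumann eigenfunction on a simply connected polygon without forcing $P$ to be an annular region, contradicting simple connectivity — or more directly because $f$ would have to be a Bessel-type function that cannot satisfy the Neumann condition on straight edges). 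Then $\Delta u = \lambda u$ reduces to $f'' = \lambda f$ in one variable, giving $f(t) = \cos(\sqrt{\lambda}\, t)$ up to scaling (the sign of $\lambda$ and the sinusoidal vs. exponential alternatives are pinned down by $u$ being a genuine $L^2$ Neumann eigenfunction with one interior nodal line).

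Finally I would deduce the shape of $P$. Since $u = \cos(\sqrt\lambda\,\dist(\cdot,L))$ must satisfy the Neumann condition $\partial_\nu u = 0$ on \emph{every} side of $P$, and $\partial_\nu u$ at a boundary point $x$ on a side with unit normal $\nu$ equals $-\sqrt\lambda \sin(\sqrt\lambda\,\dist(x,L))\,\langle \nu, \nabla \dist(\cdot,L)\rangle$, each side is either parallel to $L$ (so that $\langle \nu, \nabla\dist\rangle = 0$) or is contained in a level set $\dist(\cdot,L) \in \frac{\pi}{\sqrt\lambda}\Zbb$ where the sine vanishes, i.e.\ perpendicular to $L$. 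Hence every side of $P$ is parallel or perpendicular to $L$; a simply connected polygon all of whose sides have one of two mutually perpendicular directions must be a rectangle (any reflex or extra vertex would force $u$ or $\partial_\nu u$ to fail a boundary or eigenfunction condition, or would make $u$ not a \emph{second} eigenfunction — e.g.\ an L-shaped domain of this type has its relevant eigenfunction of this form only at a higher index, which one rules out by counting nodal domains). Identifying $L$ with the line through a side $e$ then gives exactly $u = c\cos(\sqrt\lambda\,\dist(x,e))$, completing the proof.

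\medskip

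\noindent\emph{Main obstacle.} The step I expect to be the crux is the rigidity showing $\Gamma$ is a straight segment (or circular arc) — i.e.\ extracting \emph{flatness} of the critical hypersurface from the eigenvalue equation together with $\nabla u \equiv 0$ on $\Gamma$. Presumably Theorem \ref{thm:invariant} is precisely designed to handle this, but verifying its hypotheses (that $A$ lies in a hyperplane or sphere) requires first establishing that a one-dimensional component of $\crit(u)$ for a \emph{planar} eigenfunction is automatically a line or circle, which is a genuine (if short) PDE computation rather than a formality. A secondary subtlety is the boundary analysis: handling critical points that accumulate on edges or, worse, at vertices of $P$, where the reflection trick must be applied carefully and where the polygonal (non-smooth) boundary interacts with the analytic continuation of $u$.
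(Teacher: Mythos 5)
Your plan has the right ingredients floating around (Courant's theorem, the extension by reflection, Theorem~\ref{thm:invariant} for the hyperplane case, the final boundary-condition count), but it is organized around a route that the paper's own lemmas show is a dead end, and it leaves the two hardest steps unproved.

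The central gap is that you spend the bulk of the argument trying to \emph{identify} an interior critical curve $\Gamma\subset P$ (arguing it is a level set where $|u|$ is maximal, then that it is flat, etc.), when in fact no such curve can exist. Proposition~\ref{coro:lem-2nd-no-hypersurface} — which follows immediately from Courant's theorem combined with Proposition~\ref{prop:at-least-two-components}/\ref{prop:at-least-three-components} — says that the critical set of a second Neumann eigenfunction on a simply connected Lipschitz domain contains no hypersurface in the interior. So there is nothing to ``identify'': the accumulation set must lie on $\partial P$. You touch on the ingredients of this (you observe that $\partial_\nu u=0$ along $\Gamma$ turns each side of $\Gamma$ into a Neumann problem, and that $\Gamma$ cannot meet the nodal line), but you never close the loop to the clean contradiction with the two-nodal-domain count. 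Because you miss this, you are then driven to try to prove ``flatness'' of $\Gamma$ from the PDE — a step you flag yourself as the crux and never carry out. That step is unnecessary in the paper's argument and is not a theorem in general: a critical hypersurface of a Laplace eigenfunction need not be a line or a circle; what forces flatness here is simply that the critical arc, once the interior and (by reflection) the exterior are excluded, must lie in a polygon side, i.e.\ in a straight line segment. The hypothesis of Theorem~\ref{thm:invariant} is then automatic rather than something to be derived from $\Delta u=\lambda u$ along the curve.

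The second substantial gap is the vertex case. You write that near a vertex ``only finitely many critical points can accumulate \ldots unless $u$ is locally trivial,'' but that is exactly the statement that needs proof, and it is not a soft statement: the paper's Lemma~\ref{lem:no-vertex-limit} establishes it via the Fourier–Bessel expansion of $u$ at the corner, differentiating the series in $r$ and $\theta$, handling the three regimes $k\nu-1>1$, $<1$, $=1$ separately, with the borderline $\beta=3\pi/2$ case requiring a further convergence and real-analyticity argument for the reparametrized series. Without some version of this, a sequence of isolated critical points converging to a reflex vertex is not excluded, and the reflection trick (which you invoke along edges) does not apply at a vertex. Finally, a small correction to the closing step: once the arc is pinned to a side and $u=c\cos(\sqrt\lambda\,\dist(\cdot,e))$, the rectangle conclusion comes from Proposition~\ref{prop:face-crit-rect} — every other side is either parallel to $e$ or lies on a level $\dist=k\pi/\sqrt\lambda$ — combined again with Proposition~\ref{coro:lem-2nd-no-hypersurface} to show only two successive values of $k$ can occur; your ``rule out L-shapes by counting nodal domains at higher index'' sketch is pointing in the right direction but is not the mechanism that the simple-connectivity and second-eigenfunction hypotheses actually use.
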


More generally, we show that if $u: \Omega \to \Rbb$ is a
second Neumann eigenfunction on a bounded, simply connected, 
open     set $\Omega \subset \Rbb^d$ with Lipschitz boundary, 
then $\crit(u) \cap \Omega$ 
does not contain a hypersurface (Proposition \ref{coro:lem-2nd-no-hypersurface}). 
One wonders whether there exists a non-simply connected domain $\Omega \subset \Rbb^d$ 
whose second Neumann eigenfunction contains a critical hypersurface.

Many of the results described here may be formulated so as to
apply to Laplacians associated to a real-analytic metric on
a real-analytic manifold. Moreoever,  proofs of some 
results---for example 
Theorem \ref{thm:invariant} and Proposition \ref{coro:lem-2nd-no-hypersurface} ---extend to the case of smooth metrics.
It would be interesting to know exactly which results extend to 
smooth metrics. For example, Theorem \ref{thm:rectangle}
implies that a sequence of isolated critical points of $u$ does
limit to the boundary of the polygon $P$. Does there exist
a Lipschitz domain $\Omega \subset \Rbb^2$ and a Neumann eigenfunction
$u:\Omega \to \Rbb$ so that the $\crit(u)$ consists of infinitely 
many isolated points? Note that it was recently proved in 
\cite{BLS} that there exists a smooth metric on the 2-torus
so that the associated Laplacian has a sequence $u_n$ 
of eigenfunctions with eigenvalues tending to infinity 
so that $\crit(u_n)$ consists of infinitely many 
isolated points.

In connection to Proposition \ref{coro:lem-2nd-no-hypersurface} we would 
like to mention the {\it Schiffer's conjecture}. 
A variant of this conjecture says that a Neumann eigenfunction 
on a simply connected domain can have a loop in its critical set if and only 
if the domain is a disc and the loop is a distance circle \cite{W76}.


\section{Critical hypersurfaces}

The {\em singular set}, $\sing(u)$, consists of 
critical points $x$ of $u$ such that $u(x)=0$. 
The following is Lemma (1.9) in \cite{Hardt-Simon}
specialized to the case of the Euclidean Laplacian.
See also \cite{Caffarelli-Friedman}.

\begin{lem}
Let $\Omega \subset \Rbb^d$ be an open set and 
let $u: \Omega \to \Rbb$ be a nonconstant Laplace eigenfunction.
If $U$ is open and bounded with $\oU \subset \Omega$, then
$\sing(u) \cap U$ is contained
in the union of finitely many closed analytic submanifolds 
of codimension two.
\end{lem}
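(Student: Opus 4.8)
The plan is to exploit the real-analyticity of eigenfunctions together with the classical Federer-type dimension reduction for nodal sets. First I would recall that since $u$ is a nonconstant Laplace eigenfunction on $\Omega$, it is real-analytic there; consequently its zero set $u^{-1}(0)$ is a real-analytic variety, and the singular set $\sing(u) = u^{-1}(0) \cap |\nabla u|^{-1}(0)$ is the subset where $u$ vanishes to order at least two. The key structural input is the local description of nodal sets of eigenfunctions: near any zero $x_0$, after subtracting lower-order terms, $u$ is well-approximated by a homogeneous harmonic polynomial $p_{x_0}$ of degree $N = N(x_0) \geq 1$ equal to the vanishing order of $u$ at $x_0$, and the nodal set of $u$ near $x_0$ is homeomorphic (indeed, after a suitable real-analytic change of coordinates, diffeomorphic away from a lower-dimensional set) to the nodal set of $p_{x_0}$. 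A point of $\sing(u)$ is exactly a point where $N(x_0) \geq 2$.

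Next I would stratify $\sing(u) \cap U$ according to vanishing order. On the relatively compact $\oU \subset \Omega$, the vanishing order $N(x)$ is bounded above by some integer $M$ — this is the standard doubling/frequency estimate, and it is where compactness is used. For each $k$ with $2 \leq k \leq M$, let $S_k$ be the set of points of $U$ where $u$ vanishes to order exactly $k$. Each $S_k$ is a locally closed real-analytic set: it is cut out by the vanishing of all partial derivatives of $u$ of order $< k$ minus the locus where all derivatives of order $k$ vanish. By the local model, near a point $x_0 \in S_k$ the zero set of the leading homogeneous harmonic polynomial $p_{x_0}$ of degree $k \geq 2$ is a cone whose singular set (where $\nabla p_{x_0} = 0$, i.e. the vertex locus) has codimension at least two in $\Rbb^d$; pulling back through the analytic equivalence, $S_k$ is locally contained in an analytic submanifold of codimension $\geq 2$. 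Since a bounded real-analytic set has only finitely many connected components when intersected with a relatively compact set (stratification theory, or the theory of analytic sets), each $S_k \cap U'$ for slightly shrunk $U' \Subset U$ is contained in finitely many analytic submanifolds of codimension two, and summing over the finitely many values of $k$ gives the conclusion.

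The main obstacle — and the technical heart of the argument — is establishing that $\sing(u)$ has codimension at least two and lies in a \emph{finite} union of analytic submanifolds, rather than merely being a closed analytic set of the right dimension. The dimension bound follows because the vertex set of the zero cone of a nonconstant homogeneous harmonic polynomial of degree $\geq 2$ in $\Rbb^d$ is a proper linear subspace of the already-codimension-one nodal cone, hence has codimension $\geq 2$; but transferring this from the polynomial model to $u$ itself requires the full strength of the structure theory of nodal sets of solutions to second-order elliptic equations with analytic coefficients (Caffarelli–Friedman, Hardt–Simon), which controls the error between $u$ and $p_{x_0}$ in a scale-invariant way. The finiteness then comes from the fact that an analytic set is locally a finite union of analytic manifolds (Łojasiewicz), combined with the global compactness of $\oU$; since the cited Lemma (1.9) of \cite{Hardt-Simon} already packages exactly this, the cleanest route is simply to invoke it, noting that the Euclidean Laplacian $\Delta - \lambda$ has real-analytic coefficients so the hypotheses of that lemma are met.
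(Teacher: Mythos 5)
Your proposal correctly identifies the overall strategy (stratify by vanishing order, use compactness of $\oU$ to bound the order, show each stratum lies in codimension-two manifolds), but it misses the one elementary observation that makes the paper's proof go through, and the step where you try to supply the codimension bound has a genuine gap.

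The paper's argument is entirely elementary and never invokes the homogeneous-polynomial model of the nodal set. Let $S_d$ be the set of points where $\partial^{\alpha}u$ vanishes for all $|\alpha|<d$ but not for some $|\alpha|=d$. If $x\in S_d$ with $d\geq 2$, choose $\alpha$ with $|\alpha|=d-2$ and $\Hess(\partial^{\alpha}u)(x)\neq 0$. Since all derivatives of $u$ of order $<d$ vanish at $x$, in particular $\partial^{\alpha}u(x)=0$, and hence $\Delta\partial^{\alpha}u(x)=\lambda\,\partial^{\alpha}u(x)=0$. That is, the nonzero symmetric matrix $\Hess(\partial^{\alpha}u)(x)$ is trace-free, so it has rank at least two, and therefore at least two of the gradients $\nabla(\partial_j\partial^{\alpha}u)(x)$ are linearly independent. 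The analytic implicit function theorem then exhibits a neighborhood of $x$ in which $S_d$ lies in a codimension-two analytic submanifold, and compactness of $\oU$ finishes. The ``trace-free Hessian has rank $\geq 2$'' fact is exactly the clean substitute for the structure theory you were reaching for, and it is the step your proposal never supplies.

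The step you do propose — approximating $u$ near $x_0$ by its leading homogeneous harmonic polynomial $p_{x_0}$, computing the codimension of $\{\nabla p_{x_0}=0\}$, and ``pulling back through the analytic equivalence'' — does not work as stated. In dimension $d\geq 3$ there is no local diffeomorphism or analytic change of coordinates carrying $\sing(u)$ to $\sing(p_{x_0})$; the nodal and singular sets of $u$ near a degenerate zero are not in general equivalent to those of the leading polynomial, which is precisely why Caffarelli--Friedman and Hardt--Simon need quantitative, scale-invariant estimates rather than a direct transfer. Finally, concluding by ``simply invoke Lemma (1.9) of \cite{Hardt-Simon}'' is circular here, since that is the statement you were asked to prove; the paper reproduces its short proof exactly because the argument above is self-contained.
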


For the convenience of the reader 
we provide the proof of \cite{Hardt-Simon}.
Let $\partial_j$ denote the partial derivative $\partial/\partial x_j$,
and for each multi-index $\alpha \in \Nbb^d$, set
$\partial^{\alpha} := \partial_1^{\alpha_1} \cdots \partial_d^{\alpha_d}$.

\begin{proof}
Let $S_{d}$ be the set of points $x \in \Omega$
such that $\partial^{\alpha}u(x)= 0$ for each $|\alpha|< d$
and such that $\partial^{\alpha}u(x) \neq 0$ for some $\alpha$ 
with $|\alpha|=d$.
Since $u$ is real-analytic, $\Omega$ equals 
the disjoint union $\cup_{d=0}^{\infty} S_d$. 
Because $\partial^{\alpha}u(x)$ is continuous and 
$\oU \subset \Omega$ is compact, there exists
$d_U$ such that $S_d \cap U = \emptyset$ for each 
$d \geq d_U$. 

Each singular point lies in some $S_d$ with 
$2 \leq d \leq d_U$, and so to prove the claim, 
it suffices to show that for each $d \geq 2$,
the set $S_d$ is contained in finitely many 
codimension two submanifolds. 
If $x \in S_d$ with $d \geq 2$, then $u(x)=0$ 
and there exists $\alpha$ with $|\alpha|=d-2$
so that $\Hess(\partial^{\alpha}u)(x) \neq 0$.
On the other hand, 
$\Delta \partial^{\alpha} u(x) = \lambda \partial^{\alpha}u(x)=0$
and hence the trace of $\Hess(\partial^{\alpha}u)(x)=0$.
Therefore, the rank of $\Hess(\partial^{\alpha}u)(x)$
is at least two. In particular, there exist $j$ and $k$ 
so that the vectors $\nabla (\partial_j \partial^{\alpha} u)(x)$
and $\nabla (\partial_k \partial^{\alpha} u)(x)$ are nonzero
and linearly independent. The analytic implicit function theorem
then gives a neighborhood $V_x$ of $x$ such 
that $\partial_j \partial^{\alpha} u^{-1}(0) \cap
\partial_j \partial^{\alpha} u^{-1}(0) \cap V_x$ is a 
codimension two analytic submanifold of $U$. Since $\oU$
is compact, there exists a finite set $F \subset  U$ 
such that $U = \cup_{x \in F} V_x$, and hence finitely many
codimension two submanifolds cover $S_d$.
\end{proof}

The following is immediate.

\begin{coro}
\label{coro:no-intersect-nodal}
Let $\Omega \subset \Rbb^d$ be an open set and let 
$u: \Omega \to \Rbb$ be a nonconstant Laplace eigenfunction.
Suppose that $A$ is a connected component of $\crit(u)$.
If there exists an open set $U$ such that $U \cap A$
is a hypersurface, then $A$ does not intersect $u^{-1}(0)$.
\end{coro}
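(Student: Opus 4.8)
The plan is to show that $u$ is constant on $A$ and then to exclude the value $0$ for that constant using the preceding Lemma.

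First I would record that $u$ is constant on $A$. Since $u$ is real-analytic, $\crit(u)=|\nabla u|^{-1}(0)$ is the zero set of the real-analytic function $\sum_j(\partial_j u)^2$, and hence a real-analytic subset of $\Omega$. It is classical that a real-analytic set is locally path-connected, any two nearby points being joined, inside the set, by a path of finite length (e.g.\ a piecewise real-analytic one); therefore the connected set $A$ is path-connected by rectifiable paths. Given $x,y\in A$, choose such a path $\gamma$ from $x$ to $y$ inside $A\subseteq\crit(u)$. As $\nabla u\equiv 0$ along $\gamma$, the fundamental theorem of calculus yields $u(y)-u(x)=\int \nabla u(\gamma(t))\cdot\gamma'(t)\,dt=0$, so $u\equiv c$ on $A$ for some constant $c$.

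Next I would show $c\neq 0$. If $c=0$, then $A\subseteq u^{-1}(0)\cap\crit(u)=\sing(u)$, so in particular the hypersurface $U\cap A$ is contained in $\sing(u)$. Fix $p\in U\cap A$ and a bounded open set $W$ with $p\in W$ and $\oW\subseteq\Omega$. By the Lemma, $\sing(u)\cap W$ lies in a finite union of closed analytic submanifolds of $W$ of codimension two, hence of dimension $d-2$. But a neighborhood of $p$ in $U\cap A$ is a $(d-1)$-dimensional submanifold, which has positive $(d-1)$-dimensional Hausdorff measure, whereas a finite union of submanifolds of dimension at most $d-2$ has $(d-1)$-dimensional Hausdorff measure $0$. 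This contradiction gives $c\neq 0$, and therefore $A\cap u^{-1}(0)=\emptyset$.

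I expect no real difficulty here, in keeping with the assertion that the statement is immediate: the only non-formal ingredient is that a connected real-analytic set is path-connected by rectifiable paths, which is what promotes ``$\nabla u$ vanishes on the connected set $A$'' to ``$u$ is constant on $A$'', and everything else is the quoted Lemma plus an elementary dimension count. (When $d=2$ one can avoid even this, since each singular point of $u$ is then isolated in $\crit(u)$ — the leading homogeneous term of $u$ at such a point is a nonzero harmonic polynomial of degree $\ge 2$, whose gradient has no zeros off the origin — so a component of $\crit(u)$ meeting $u^{-1}(0)$ is a single point and contains no hypersurface; in higher dimensions the constancy argument seems genuinely needed.)
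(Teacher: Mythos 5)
Your proof is correct and fills in exactly the argument the paper leaves implicit behind ``the following is immediate'': reduce to the two steps (i) $u$ is constant on the connected component $A$ since $\nabla u\equiv 0$ there, and (ii) rule out the constant being zero by the preceding Lemma, since a codimension-one piece of $A$ cannot sit inside a finite union of codimension-two submanifolds. The only genuinely non-formal point is step (i), and you have correctly identified and justified it via local rectifiable path-connectedness of real-analytic sets; your dimension count in step (ii) via $(d-1)$-Hausdorff measure is also sound, as is the $d=2$ aside about isolated singular points.
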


\vspace{.3cm}

The Cauchy-Kovalevskaya theorem implies that each solution
to $\Delta u = \lambda \cdot u$ is real-analytic.
Thus, each partial derivative $\partial_j u$ is real-analytic,
and therefore $\crit(u)$ is a real-analytic variety. 
In general, every real-analytic variety $S$ 
possesses a stratification. In particular, there 
exists a (possibly disconnected) $m$-dimensional real-analytic submanifold 
$R \subset S$ so that the Hausdorff dimension of $S \setminus R$
is at most $m-1$.\footnote{See, for example, paragraph 3.4.10 in \cite{Federer}.} 
In the case where $S$ is a connected component of  $\crit(u)$, 
the following implies that $S \setminus R$ is empty.

\begin{thm}
\label{thm:crit-structure-higher}
Let $\Omega \subset \Rbb^d$ be open, suppose that 
$u: \Omega \to \Rbb$ solves $\Delta u = \lambda \cdot u$ with $\lambda \neq 0$,
and let $A$ be a connected component of $\crit(u)$.
If there exists an open set $U \subset \Omega$ such that $U \cap A$
is a $d-1$ dimensional manifold, 
then $A$ is a proper real-analytic manifold of dimension $d-1$.
\end{thm}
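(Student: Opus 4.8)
The plan is to show that the "bad" part $A \setminus R$ of the stratification is empty, where $R \subset A$ is the top-dimensional ($(d-1)$-dimensional) real-analytic submanifold furnished by the stratification of the real-analytic variety $\crit(u)$. Equivalently, I want to prove that every point $x \in A$ has a neighborhood in which $\crit(u)$ is a smooth $(d-1)$-dimensional manifold. Fix $x \in A$. Since $A$ is connected and $U \cap A$ is already a hypersurface near some point, the set of points of $A$ admitting a hypersurface neighborhood is open; I will show it is also closed in $A$, which forces it to be all of $A$.

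First I would record the key local fact: near a point $p$ where $U \cap A$ is a $(d-1)$-manifold, the function $u$ is, up to sign and reparametrization, the distance-to-$A$ picture — more precisely, $\nabla u$ vanishes to first order transversally, so after possibly subtracting the constant $u(p)$ (note $u(p) \neq 0$ by Corollary \ref{coro:no-intersect-nodal}, so $u$ is bounded away from $0$ near $A$), the eigenfunction looks like $u(p)\cos(\sqrt{\lambda}\, t) + O(t^2)$ in the normal coordinate $t$, or the analogous $\cosh$ expression if $\lambda < 0$. The crucial structural input is that along $A$ we have $u \neq 0$, hence $\Delta u = \lambda u$ is nonvanishing, so the Hessian of $u$ has nonzero trace there; this prevents $A$ from being a point where all second derivatives degenerate.

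The main step is the closedness. Let $x$ be a limit of points $x_n \in A$ each having a hypersurface neighborhood. I need to produce such a neighborhood at $x$. Consider the real-analytic map $F = \nabla u : \Omega \to \Rbb^d$; then $\crit(u) = F^{-1}(0)$. At each $x_n$ the variety $F^{-1}(0)$ is locally a smooth hypersurface, which (since it has codimension $1$ inside the $d$ equations $\partial_j u = 0$) means the common zero set is cut out cleanly; concretely, there is a single real-analytic function $f_n$ (a suitable linear combination, or one coordinate $\partial_j u$) with nonvanishing gradient whose zero set agrees locally with $\crit(u)$, and all other $\partial_k u$ vanish to order $\geq 1$ on it. I would pass to the limit: by real-analyticity and the fact that $u(x) \neq 0$ (so the Hessian $\Hess\, u(x)$ has rank $\geq 1$, coming from the trace being $\lambda u(x) \neq 0$), there is a direction $e$ with $\partial_e u$ vanishing to exactly first order at $x$; its zero set $Z$ is a smooth hypersurface through $x$. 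It then remains to check $\crit(u) = Z$ near $x$. One inclusion is clear since $\crit(u) \subset Z$. For the reverse, I would argue that on $Z$ the remaining components of $\nabla u$ are real-analytic functions whose restriction to $Z$ vanishes on the open (in $Z$) dense set accumulating from the $x_n$'s — using that the hypersurface pieces of $A$ near $x_n$ all lie in the connected $A$ and hence in $F^{-1}(0) \cap Z$ — so by the identity principle on the connected manifold $Z$ they vanish identically near $x$, giving $Z \subset \crit(u)$.

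The hard part will be the last accumulation/identity-principle argument: ensuring that the hypersurface germs at the $x_n$ genuinely organize into an open subset of the single smooth sheet $Z$ at $x$, rather than several sheets crossing at $x$ or the $x_n$ approaching $x$ "tangentially" from a lower-dimensional stratum. This is where real-analyticity is essential — a priori $\crit(u)$ could be a union of sheets near $x$, but the trace condition $\mathrm{tr}\,\Hess\, u = \lambda u \neq 0$ along $A$, together with the fact that a first-order-degenerate $\partial_e u$ exists, rules out the branching: at a branch point the relevant analytic functions would have to vanish to order $\geq 2$ in every direction, contradicting that the $(d-1)$-dimensional germ is reduced. Once branching is excluded, $A$ coincides near $x$ with the single smooth sheet $Z$, the set of good points is closed, and connectedness of $A$ finishes the proof; properness of $A$ as a subset of $\Omega$ follows since $\crit(u)$ is closed in $\Omega$ and $A$, being a connected component, is closed as well.
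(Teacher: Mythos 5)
Your proposal is essentially the paper's proof: you define the same set of ``good'' points (those with a hypersurface neighborhood in $A$), observe it is open, and prove it is closed by the same chain of observations --- $u\neq 0$ on $A$ by Corollary \ref{coro:no-intersect-nodal}, hence $\mathrm{tr}\,\Hess\,u = \lambda u \neq 0$ on $A$, hence some second derivative is nonzero, so one partial derivative $\partial_j u$ cuts out a smooth analytic slice $Z$ through the limit point, and the other $\partial_k u$ vanish on an open piece of $Z$ and hence, by real-analyticity, on all of $Z$ near the limit point. The paper picks $j$ with $\partial_j^2 u(y)\neq 0$ (a diagonal entry) and straightens $\{\partial_j u=0\}$ by the analytic inverse function theorem, where you instead pick a general direction $e$ with $\Hess\,u(y)\,e \neq 0$; these are the same move.

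The one place where you hesitate is not actually a gap, and you should not leave it as a ``hard part.'' You do not need to organize \emph{all} the germs at the $x_n$ into a single sheet, rule out branching at $y$, or worry about tangential approach from a lower-dimensional stratum. It suffices to take a \emph{single} $x_n \in E$ lying in the straightening neighborhood $V$ of $y$. By definition of $E$, there is a neighborhood $W\subset V$ of $x_n$ with $A\cap W$ a $(d-1)$-dimensional submanifold; since $A\cap W \subset \crit(u) \subset Z$ and $Z$ is also $(d-1)$-dimensional, $A\cap W$ is automatically a nonempty \emph{open} subset of $Z$ (no density needed). Each $\partial_k u$ restricts to a real-analytic function on the connected slice $Z\cap V$ vanishing on this open set, so it vanishes identically there by the identity principle, giving $\crit(u)\cap V = Z\cap V$ and hence $y\in E$. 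The heuristic appeal to ``reducedness of the germ'' to exclude branching is unnecessary once you notice this dimension count; the paper's proof does not need it either.
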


\begin{proof}
Let $E$ denote the subset of $A$ consisting of $x$ that have a neighborhood 
$W$ such that $A \cap W$ is a smooth hypersurface. The set $E$
is open. Since $A$ is closed and connected, it suffices to 
prove that $E$ is also closed.

Let $y$ lie in the closure of $E$. Then $y \in A$,
and Corollary \ref{coro:no-intersect-nodal} implies that $u(y) \neq 0$.
Hence since $\lambda \neq 0$, we have $\Delta u(y) \neq 0$.
In particular, $\partial_j^2 u(y) \neq 0$ for some $j$.
Because $y \in A$, we have $\partial_j u(y)=0$. Therefore, 
the analytic inverse function theorem implies that there exists an
open ball $B$ centered at the origin, a neighborhood $V$ of $y$,
and an analytic diffeomorphism $\phi: B \to V$ so that 
$(\partial_j u) \circ \phi(z)=0$ if and only $z_j=0$. 
The open set $V$ contains some $x \in E$, and hence there
exists a neighborhood $W \subset V$  of $x$ such that $A \cap W$ 
is a hypersurface. It follows that $A \cap W=\phi({z:z_j=0}) \cap W$.  
In particular, for each $k$, the restriction of 
$(\partial_k u) \circ \phi(z)$ to $\{z: z_j =0\} \cap \phi^{-1}(W)$ 
vanishes identically. Since $(\partial_k u) \circ \phi(z)$ is real-analytic,
it vanishes on $B \cap \{z: z_j =0\}$, and hence 
$A \cap V= \{x: \partial_ju(x)=0\}$.  Therefore $y \in E$,
and $E$ is closed. 
\end{proof}

\begin{prop}
\label{prop:at-least-three-components}
Let $\Omega \subset \Rbb^d$ be a bounded open set
with Lipschitz boundary, and let $u: \Omega \to \Rbb$ 
be a nonconstant Neumann eigenfunction. 
Suppose that $M \subset \crit(u)$ is a hypersurface.
If $U$ is a connected component of $\Omega \setminus M$,
then $U \setminus u^{-1}(0)$ has at least two components.
\end{prop}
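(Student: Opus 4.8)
The plan is to prove that $u$ changes sign on $U$; the statement then follows at once, since $U^{+}:=U\cap\{u>0\}$ and $U^{-}:=U\cap\{u<0\}$ are two nonempty, disjoint, open subsets of $U$ whose union is $U\setminus u^{-1}(0)$, so that $U\setminus u^{-1}(0)$ has at least two connected components. (We may assume $\Omega$ is connected, applying the argument otherwise to the component of $\Omega$ that meets $M$.)

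To show that $u$ changes sign on $U$ I would first prove that $\int_{U}u\,dx=0$. The key observation is that $u$ satisfies a homogeneous Neumann condition along \emph{all} of $\partial U$. Since the connected components of the open set $\Omega\setminus M$ are open, the portion of $\partial U$ lying in $\Omega$ is contained in $M$; along $M\subseteq\crit(u)$ the gradient $\nabla u$ vanishes identically, so in particular $\partial_{\nu}u=0$ there, whereas along $\partial U\cap\partial\Omega$ one has $\partial_{\nu}u=0$ because $u$ is a Neumann eigenfunction. Hence Green's identity, together with the vanishing of $\partial_{\nu}u$ on $\partial U$, gives
\[
\lambda\int_{U}u\,dx=\int_{U}\Delta u\,dx=0 ,
\]
and since a nonconstant Neumann eigenfunction has eigenvalue $\lambda\neq0$, we conclude that $\int_{U}u\,dx=0$. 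Because $u$ is real-analytic and nonconstant on the connected set $\Omega$, it is nonconstant on the open set $U$; combined with $\int_{U}u=0$ this forces $u$ to take a positive value somewhere in $U$ and a negative value somewhere in $U$, as required.

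The step I expect to be the main obstacle is justifying Green's identity on $U$ rigorously, since $\partial U$ need not be regular: a priori $M$ may be noncompact and may accumulate on $\partial\Omega$, and the way in which it approaches $\partial\Omega$ is not under control. I would handle this by approximation. Writing $d_M(x):=\dist(x,M)$, one applies the weak form of the eigenvalue equation, $\int_{\Omega}\nabla u\cdot\nabla\phi\,dx=\lambda\int_{\Omega}u\phi\,dx$ for $\phi\in H^{1}(\Omega)$, to the Lipschitz functions $\phi_{\eps}$ defined by $\phi_{\eps}=\min(1,d_M/\eps)$ on $U$ and $\phi_{\eps}=0$ on $\Omega\setminus U$; these are continuous across $M$, and $\phi_{\eps}\to\mathbf{1}_{U}$ pointwise, so the right-hand side tends to $\lambda\int_{U}u$. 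By Theorem \ref{thm:crit-structure-higher} the connected component of $\crit(u)$ containing $M$ is a smooth hypersurface on which $\nabla u$ vanishes, so $|\nabla u|\lesssim d_M$ near $M$ (using interior regularity of $u$, and the boundary regularity of $u$ near the portions of $\partial\Omega$ that $M$ can reach); since $|\nabla\phi_{\eps}|\le\eps^{-1}$ and $\nabla\phi_{\eps}$ is supported in $\{d_M<\eps\}\cap U$, the left-hand side is at most $C\,|\{d_M<\eps\}\cap\Omega|$, which tends to $0$. Letting $\eps\to0$ yields $\lambda\int_{U}u\,dx=0$, completing the argument.
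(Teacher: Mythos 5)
Your argument is correct and follows the paper's approach: integration by parts over $U$ (using $\nabla u=0$ on $M$ together with the Neumann condition on $\partial U\cap\partial\Omega$) yields $\lambda\int_U u\,dx=0$, and since $\lambda\neq 0$ and $u$ is not identically zero on $U$, it must change sign there. You supply considerably more rigor than the paper---notably the Lipschitz cutoff approximation justifying Green's identity on the possibly irregular domain $U$, where the paper simply asserts that $u|_U$ is a Neumann eigenfunction on $U$ and integrates by parts---but the underlying idea is the same.
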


\begin{proof}
Since $\nabla u|_M=0$, the restriction $u|_U$ is a Neumann eigenfunction
on the bounded domain $U$. Integration by parts gives 
$ \lambda \int_U u = \int_U (\Delta u) \cdot 1 = 0$.
Since $u$ is nonconstant we have $\lambda \neq 0$, and 
hence $u$ takes both positive and negative values. 
\end{proof}

\begin{prop}
\label{prop:at-least-two-components}
Let $\Omega \subset \Rbb^d$ be simply connected, 
bounded, and open with Lipschitz boundary, and let $u: \Omega \to \Rbb$ 
be a nonconstant Neumann eigenfunction. If $\crit(u)$ contains 
an analytic hypersurface $M$, then $\Omega \setminus u^{-1}(0)$ has
at least three connected components.
\end{prop}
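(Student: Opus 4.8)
The plan is to replace the given hypersurface $M$ by a properly embedded critical hypersurface, show it disconnects $\Omega$, and then apply Proposition \ref{prop:at-least-three-components} to the pieces. Throughout, note that $\Omega$ is connected and $u$ nonconstant, so $\lambda \neq 0$.

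\emph{Step 1: a proper critical hypersurface.} After replacing $M$ by one of its connected components, $M$ lies in a single connected component $A$ of $\crit(u)$. Now $\crit(u) = |\nabla u|^{-1}(0)$ is a proper real-analytic variety which cannot contain an open set (else $u$ would be constant), so its smooth $(d-1)$-dimensional locus is dense in it and in particular meets the $(d-1)$-manifold $M$; near such a point $x_0 \in M$ the variety $\crit(u)$ coincides with $M$, so $A$ satisfies the hypothesis of Theorem \ref{thm:crit-structure-higher}. Hence $A$ is a proper real-analytic hypersurface; being a connected component of the closed set $\crit(u)$, it is closed in $\Omega$; and by Corollary \ref{coro:no-intersect-nodal} it is disjoint from $u^{-1}(0)$, so the continuous function $u$ has a fixed sign on the connected set $A$. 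Replacing $u$ by $-u$ if necessary, assume $u > 0$ on $A$.

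\emph{Step 2: $A$ separates $\Omega$.} I claim $\Omega \setminus A$ is disconnected, and this is the one genuine obstacle. If it were connected, fix $x_0 \in A$ and a small ball $B \ni x_0$ in which the embedded hypersurface $A$ splits $B$ into two sides; join a point of one side to a point of the other by a path in $\Omega \setminus A$, then close it up by a short arc through $x_0$ crossing $A$ transversally exactly once. This produces a loop in $\Omega$ whose mod-two intersection number with the closed embedded hypersurface $A$ is odd, which is impossible since that number is a homotopy invariant and $\Omega$ is simply connected (equivalently, the class Poincar\'e--Lefschetz dual to $A$ lies in $H^1(\Omega;\Zbb/2)=0$). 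So $\Omega \setminus A$ has at least two connected components $U_1$ and $U_2$. One checks only that the loop stays in a compact part of $\Omega$ and can be made transverse to $A$, which is routine as $A$ is properly embedded and analytic.

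\emph{Step 3: counting nodal domains.} Apply Proposition \ref{prop:at-least-three-components} with the critical hypersurface taken to be $A$ itself: for $i = 1, 2$, its proof shows that $u$ takes both positive and negative values on $U_i$, so $\{u<0\}$ meets each $U_i$. Since $u > 0$ on $A$, the open set $\{u<0\}$ is contained in $\Omega \setminus A = U_1 \sqcup U_2 \sqcup \cdots$, hence every connected component of $\{u<0\}$ lies in a single $U_j$; as $\{u<0\}$ meets both $U_1$ and $U_2$, it has at least two components. Finally $\Omega \setminus u^{-1}(0) = \{u>0\} \sqcup \{u<0\}$, so the components of $\Omega \setminus u^{-1}(0)$ are exactly those of $\{u>0\}$ together with those of $\{u<0\}$; since $\{u>0\}$ is nonempty (it contains $A$) and $\{u<0\}$ contributes at least two, there are at least three in all.
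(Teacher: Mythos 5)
Your proof is correct, and Steps~1 and~2 follow the paper's line (pass to the connected component $A$ of $\crit(u)$, invoke Theorem~\ref{thm:crit-structure-higher} and Corollary~\ref{coro:no-intersect-nodal}, then separate $\Omega$ by the proper hypersurface $A$ --- your mod-two intersection argument is just a repackaging of Lemma~\ref{lem:hypersurface-separate}). Step~3, however, takes a genuinely different and in some ways cleaner route. The paper singles out the unique component $V$ of $\Omega\setminus u^{-1}(0)$ that meets $M$, picks components $V_\pm\subset U_\pm$ disjoint from $V$, and shows by a path argument (using that $\Omega\setminus M$ has \emph{exactly} two components with $\overline{U}_-\cap\overline{U}_+=M$, a fact it cites from Hirsch) that $V_-$, $V_+$, $V$ are pairwise distinct. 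You instead exploit the sign of $u$ on $A$: since $A$ is connected, disjoint from the nodal set, and (say) $u>0$ on $A$, the open set $\{u<0\}$ avoids $A$ entirely, so each of its components sits inside a single component of $\Omega\setminus A$; combining with Proposition~\ref{prop:at-least-three-components} forces $\{u<0\}$ to have components in both $U_1$ and $U_2$, and $\{u>0\}\supset A$ supplies a third. This bypasses the ``exactly two components with common frontier $M$'' fact and the path-crossing argument, needing only that $\Omega\setminus A$ has at least two components, which is a nice simplification. One small point to tighten in Step~1: the claim that the smooth $(d-1)$-dimensional locus of $\crit(u)$ meets $M$ deserves a word more --- it follows because the union of lower-dimensional strata has Hausdorff dimension $\le d-2$ and so cannot contain the $(d-1)$-manifold $M$ --- but this is the same point the paper itself glosses over when it applies Theorem~\ref{thm:crit-structure-higher}.
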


\begin{proof}
Proposition \ref{thm:crit-structure-higher} implies that 
$M$ is a smooth proper hypersurface, and so
Lemma \ref{lem:hypersurface-separate} below implies that 
$\Omega \setminus M$ has at least two connected components.
In fact, since $M$ is connected, $\Omega \setminus M$ has exactly 
two connected components $U_-$ and $U_+$  and $\oU_- \cap \oU_+ = M$.\footnote{
See e.g. \cite{Hirsch} Lemma 4.4.4.}
Proposition \ref{prop:at-least-three-components} implies that
$U_{\pm} \setminus u^{-1}(0)$ 
has (at least) two connected components.

By Corollary \ref{coro:no-intersect-nodal}, the 
hypersurface $M$ does not intersect $u^{-1}(0)$. 
Thus, since $M$ is connected, $M$ intersects at most 
one connected component $V$ of $\Omega \setminus u^{-1}(0)$. 
Thus, there exists a connected component $V_{\pm}$  
of $U_{\pm} \setminus u^{-1}(0)$ that is disjoint from $V$.

Let $\alpha:[-1,1] \to \Omega$ be a path that joins a point in 
$V_-$ to a point in $V_-$. Since $\alpha(\pm 1) \in U_{\pm}$ and
$U_-$ and $U_+$ are distinct connected components of $\Omega \setminus M$ with
$\oU_-\cap \oU_+ = M$, there exists $t$ so that $\alpha(t) \in M$.
In particular, since $M$ is a smooth manifold, there 
exists $s$ so that $\alpha(s) \in V$. Since $V$ is disjoint
from both $V_-$ and $V_+$, the components $V_+$ and $V_-$ are distinct.  
Hence $V_-$, $V_+$, and $V$ are three distinct components of 
$\Omega \setminus u^{-1}(0)$.
\end{proof}

The following is well-known. See \cite{Samelson} or Theorem 4.4.6 in \cite{Hirsch}.

\begin{lem}
\label{lem:hypersurface-separate}
Let $\Omega \subset \Rbb^d$ be a simply connected open set.
If $H \subset \Omega$ is a proper smooth hypersurface
then $\Omega \setminus H$ is not connected.
\end{lem}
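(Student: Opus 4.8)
The plan is to deduce the result from the long exact sequence of the pair $(\Omega, \Omega \setminus H)$ together with Alexander–Lefschetz duality, or equivalently from a Mayer–Vietoris argument using a tubular neighborhood of $H$. First I would recall that a proper smooth hypersurface $H \subset \Omega$ is a closed submanifold, so it has a tubular neighborhood $N$; since $H$ is a $(d-1)$-manifold without boundary (being properly embedded and of codimension one), the normal bundle is a line bundle over $H$. The key dichotomy is whether this line bundle is trivial (orientable normal bundle) or not. I would argue that over $\Omega$ it must be trivial: a nontrivial normal bundle would produce, via the Thom isomorphism and the inclusion $N \hookrightarrow \Omega$, a nonzero class detecting a loop in $\Omega$ that meets $H$ transversally in one point, and simple connectivity of $\Omega$ forbids this. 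Hence $N \setminus H$ has two components (two ``sides''), each deformation retracting onto $H$.

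Next I would run Mayer–Vietoris with the open cover $\Omega = N \cup (\Omega \setminus H)$, whose intersection is $N \setminus H$. Writing $b_0$ for the number of connected components (rank of $H_0$), the relevant tail of the sequence reads
\[
H_1(\Omega;\Zbb_2) \longrightarrow \tilde H_0(N \setminus H;\Zbb_2) \longrightarrow \tilde H_0(N;\Zbb_2) \oplus \tilde H_0(\Omega \setminus H;\Zbb_2) \longrightarrow \tilde H_0(\Omega;\Zbb_2) \longrightarrow 0.
\]
Simple connectivity of $\Omega$ kills $H_1(\Omega;\Zbb_2)$, and $\tilde H_0(\Omega;\Zbb_2)=0$, $\tilde H_0(N;\Zbb_2)=0$ since $N$ retracts to the connected $H$ (one may reduce to $H$ connected, or treat components separately). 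So $\tilde H_0(\Omega \setminus H;\Zbb_2) \cong \tilde H_0(N \setminus H;\Zbb_2) \neq 0$ because $N \setminus H$ has (at least) two components from the previous paragraph. Therefore $\Omega \setminus H$ is disconnected.

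The main obstacle is the normal-bundle triviality step: one needs that in a simply connected open subset of $\Rbb^d$ a properly embedded codimension-one submanifold is two-sided. This is exactly where simple connectivity enters — a Möbius-type embedding is exactly what it rules out — and it can be made rigorous either by the intersection-pairing argument sketched above (a transversal loop through $H$ would be nontrivial in $H_1(\Omega;\Zbb_2)$, contradicting $\pi_1(\Omega)=1$) or by invoking the cited sources \cite{Samelson}, \cite{Hirsch} directly, which handle precisely this statement. I would present the tubular-neighborhood/Mayer–Vietoris route in $\Zbb_2$ coefficients to avoid any orientability bookkeeping, and note that properness of $H$ is what guarantees $N$ can be taken with $\oN \subset \Omega$ and that $\Omega \setminus H$ is open, so the cover is by open sets as required.
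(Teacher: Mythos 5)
Your approach is correct in outline but takes a genuinely different (and heavier) route than the paper. The paper's argument is a direct transversality/cobordism argument: it constructs a short arc $\alpha$ crossing $H$ transversally exactly once, assumes for contradiction that an arc $\beta$ in $\Omega\setminus H$ joins the same endpoints, uses simple connectivity to homotope $\alpha$ to $\beta$ rel endpoints, perturbs the homotopy to be smooth and transverse to $H$, and then observes that the preimage of $H$ would be a compact $1$-manifold with exactly one boundary point, contradicting the mod-$2$ parity of $\partial$ for compact $1$-manifolds. This is short, self-contained, and requires only transversality and the classification of compact $1$-manifolds. Your route goes through a tubular neighborhood, two-sidedness of $H$, and Mayer--Vietoris; that buys a more ``structural'' proof and lands naturally on the reduced $H_0$ computation, but at the cost of more machinery and, importantly, of a two-sidedness lemma that you are treating as a black box.

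The weak point is exactly the two-sidedness step, and it is worth flagging that your justification for it is not really independent of the paper's argument. Since $H$ is proper in $\Omega$ but generally noncompact and not closed in $\Rbb^d$, Samelson's orientability theorem (which is about closed hypersurfaces of $\Rbb^d$) does not apply directly, and the Thom-class/Poincar\'e-duality phrasing needs care (you would want Borel--Moore or compactly supported cohomology, and $H^1(H;\Zbb_2)$ need not vanish). The clean way to prove two-sidedness here is: if $w_1(\nu)\neq 0$, there is a loop $\gamma$ in $H$ along which $\nu$ is nontrivial; concatenating a small normal arc through $H$ with a pushoff of $\gamma$ produces a loop $\delta\subset\Omega$ meeting $H$ transversally in exactly one point; $\delta$ is null-homotopic since $\pi_1(\Omega)=1$, and perturbing a bounding disc to be transverse to $H$ yields a compact $1$-manifold with an odd number of boundary points --- contradiction. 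But that is precisely the paper's parity argument, so your proof ends up invoking the same core idea as a sub-step while also carrying the Mayer--Vietoris overhead. The MV part itself is fine: with $H$ connected (WLOG, by passing to a component), $H_1(\Omega;\Zbb_2)=0$ forces $\tilde H_0(N\setminus H;\Zbb_2)\hookrightarrow \tilde H_0(\Omega\setminus H;\Zbb_2)$, and two-sidedness gives $\tilde H_0(N\setminus H;\Zbb_2)\cong\Zbb_2$, so $\Omega\setminus H$ is disconnected. If you want a streamlined version, either fill in the two-sidedness argument as above, or simply run the paper's transversality argument directly and skip the tubular-neighborhood and homology apparatus altogether.
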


\begin{proof}
Near $H$ we can find points $x_{\pm}$ and a smooth arc $\alpha:[-1,1] \to \Omega$ 
with $h(\pm 1)= x_{\pm}$ so that $\alpha^{-1}(H)=\{0\}$ and so that $\alpha'(0)$ 
does not lie in the tangent space to $H$. 

Suppose that $\Omega \setminus H$ were connected. 
Then there would exist an arc $\beta$ joining $x_+$ to $x_-$ so that $H \cap \beta = \emptyset$.
Since $\Omega$ is simply connected, there exists a homotopy 
$h:[-1,1]\times [0,1] \to \Omega$ with $h(t,0) =\alpha(t)$, $h(t,1)=\beta(t)$,
and $h(\pm 1,s) = x_{\pm}$ for each $s$.
By approximation, we may assume that $h$ is smooth and 
transverse to $H$ (see e.g. \cite{Hirsch} Theorem 3.2.1). 
It follows that $h^{-1}(H)$ is a 1-dimensional compact submanifold
of $S:=[-1,1] \times [0,1]$ whose boundary lies in $\partial S$. 
The point $(0,0)$ is the only point
that lies in both $h^{-1}(H)$ and $\partial S$. 
Hence the compact 1-manifold $h^{-1}(H)$
has exactly one boundary component, a contradiction. 
\end{proof}


\section{Invariant critical hypersurfaces}

In this section, we suppose that a hypersurface component of 
the critical set is invariant under local isometries, and 
show that this forces a solution to 
$\Delta u = \lambda u$ to depend only 
on the distance to the hypersurface.

Recall that a vector field is called a Killing field if and only 
if the local flow that it generates consists of local isometies. 
It follows that the Laplacian commutes with each Killing field.
Each Killing field on $\Rbb^d$ is either rotational or constant.  

Given an orientable hypersurface $M$, there exists a smooth function 
$r$ defined near $M$ so that $|r(x)|$ is the distance from $x$ to $M$. 
Let $\partial_r$ denote the gradient of $r$. Note that
$\partial_r$ is a unit vector field that is tangent to geodesics 
that meet $M$ orthogonally.  It is unique up to multiplication by $-1$.

\begin{lem}
\label{lem:commute}
If $X$ is a Killing field that is tangent to $M$,
then $[\partial_r, X] =0$.
\end{lem}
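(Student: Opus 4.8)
The plan is to show that the flow of $X$ preserves the function $r$ (up to sign), and then invoke the naturality of the Lie bracket, or equivalently to compute the bracket directly using the geometric characterization of $\partial_r$.

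First I would record the two defining properties of $\partial_r$: it is a unit vector field, $\langle \partial_r, \partial_r\rangle \equiv 1$, and its integral curves are unit-speed geodesics meeting $M$ orthogonally; equivalently, $\partial_r = \grad r$ where $r$ is (signed) distance to $M$. Since $X$ is a Killing field, its local flow $\phi_t$ consists of local isometries, and since $X$ is tangent to $M$, each $\phi_t$ maps (a neighborhood of) $M$ into $M$. An isometry that preserves $M$ must preserve the distance-to-$M$ function, so $r \circ \phi_t = r$ near $M$ (choosing the sign of $r$ consistently, which is possible since $\phi_t$ is close to the identity and $M$ is two-sided here). Differentiating at $t=0$ gives $X(r) = 0$, i.e. $\langle X, \grad r\rangle = \langle X, \partial_r\rangle = 0$, so $X$ is orthogonal to $\partial_r$ everywhere near $M$.

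Next I would compute $[\partial_r, X]$. Because $\phi_t$ is an isometry, it carries the gradient of any function $f$ to the gradient of $f \circ \phi_{-t}$; applying this to $f = r$ and using $r \circ \phi_{-t} = r$, we get $(\phi_t)_* \partial_r = \partial_r$, i.e. $\partial_r$ is invariant under the flow of $X$. By definition of the Lie derivative, this means $\Lcal_X \partial_r = [X, \partial_r] = 0$, hence $[\partial_r, X] = 0$. Alternatively, if one prefers a coordinate-free differential-geometric argument: in Fermi coordinates $(r, y)$ adapted to $M$, the metric has the block form $dr^2 + g_r(y)$ with no cross terms, $\partial_r$ is the first coordinate field, and a Killing field tangent to the $M$-slices with $X(r)=0$ has components depending on $y$ and $r$ but with vanishing $\partial_r$-component; the bracket $[\partial_r, X]$ is then $\partial_r$ of the components of $X$, which one shows vanishes using the Killing equation restricted to the $r$-direction (the equation $\nabla_{\partial_r}X \perp \partial_r$ together with $X \perp \partial_r$ forces $\nabla_{\partial_r} X$ to be the second fundamental form applied to $X$, and then $[\partial_r,X] = \nabla_{\partial_r}X - \nabla_X \partial_r = 0$ since $\nabla_X \partial_r$ is also the shape operator applied to $X$). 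I would go with the flow argument as it is cleanest.

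The main obstacle is the sign ambiguity in $r$ and $\partial_r$: $r$ is only well-defined up to sign globally, and $\partial_r$ up to multiplication by $-1$. The point to be careful about is that for $t$ small the isometry $\phi_t$, being close to the identity, cannot swap the two sides of $M$, so it genuinely fixes a consistent choice of signed $r$ on a neighborhood, and the differentiation at $t=0$ is legitimate. Once that is handled the rest is immediate. (One should also note we only need $[\partial_r, X]=0$ on the neighborhood of $M$ where $r$ is smooth, which is all that is used later.)
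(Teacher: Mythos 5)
Your proposal is correct and takes essentially the same approach as the paper. The paper shows that the flows of $\partial_r$ and $X$ commute by noting that $\phi_s$ is an isometry preserving $M$, hence carries the distance-realizing geodesic from $x\in M$ to $\psi_t(x)$ to the distance-realizing geodesic from $\phi_s(x)$ to $\phi_s(\psi_t(x))$; you instead observe directly that $r\circ\phi_t=r$, so $(\phi_t)_*\grad r=\grad r$, which is the same fact phrased at the level of the vector field rather than its integral curves.
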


\begin{proof}
We claim that $\partial_r$ commutes with $X$.
Let $\phi_t$ denote the flow generated by $X$, and let  
$\psi_t$ denote the flow generated by $\partial_r$.  
Suppose $x \in M$. Then for each sufficiently small
$t$ we have $\dist(x,\psi_{t}(x))= t$ and since $\phi_s$ 
is as isometry we have $\dist( \phi_s(x), \phi_s(\psi_t(x)))=t$
for small $s$. 
The unique geodesic that realizes the latter distance
is a flow line of $\partial_r$ with length $t$ and hence
$\psi_t(\phi_s(x))= \phi_s(\psi_t(x))$ for all small $s$, $t$.
The claim follows.
\end{proof}

\begin{prop}
\label{prop:Killing-vanish}
Let $\Omega \subset \Rbb^d$ be open and connected, 
and let $M \subset \Omega$ be an oriented hypersurface.
Suppose that $X$ is a Killing field on $\Omega$ that is tangent to $M$. 
If $u: \Omega \to \Rbb$ satisfies $\Delta u = \lambda \cdot u$  and $\nabla u$ 
vanishes on $M$, then $X u \equiv 0$ on $\Omega$.
\end{prop}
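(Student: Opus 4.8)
The plan is to show that the single function $w := Xu$ is again a solution of the same equation whose Cauchy data vanish along $M$, and then invoke unique continuation. Since $X$ is a Killing field, the Laplacian commutes with $X$, so on all of $\Omega$ we have $\Delta w = X(\Delta u) = \lambda \cdot Xu = \lambda w$. In particular $w$ solves $\Delta w = \lambda w$ and is therefore real-analytic, by the same Cauchy--Kovalevskaya remark that applies to $u$. So it suffices to prove that $w$ vanishes to infinite order at a point of $M$: since $w$ is real-analytic and $\Omega$ is connected, the set of points at which all partial derivatives of $w$ vanish is open and closed, hence all of $\Omega$.

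First I would record that $w$ vanishes on $M$: because $\nabla u$ vanishes on $M$, we have $w = Xu = \langle X,\nabla u\rangle = 0$ on $M$. The less immediate point is that the normal derivative of $w$ also vanishes on $M$. Working in the neighborhood of $M$ on which $r$ and $\partial_r$ are defined, Lemma \ref{lem:commute} gives $[\partial_r,X]=0$ there, so $\partial_r w = \partial_r(Xu) = X(\partial_r u)$. Now $\partial_r u = \langle \partial_r,\nabla u\rangle$ vanishes on $M$; and since $X$ is tangent to $M$, its flow carries $M$ into itself, so for $p\in M$ the derivative $X(\partial_r u)(p)$ depends only on the values of $\partial_r u$ along $M$, which are $0$. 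Hence $\partial_r w = 0$ on $M$ as well, and $w$ has vanishing Cauchy data on $M$.

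Finally I would upgrade this to infinite-order vanishing at a point $p\in M$. Choosing Fermi coordinates $(y,t)$ near $p$ in which $M=\{t=0\}$ and $t=r$, the equation takes the form $\partial_t^2 w = \lambda w - a(y,t)\,\partial_t w - L_t w$, where $a$ is smooth and, for each fixed $t$, $L_t$ is a second-order operator in the $y$-variables with smooth coefficients. Since $w(\cdot,0)\equiv 0$ and $\partial_t w(\cdot,0)\equiv 0$, all $y$-derivatives of $w$ and of $\partial_t w$ vanish on $\{t=0\}$; differentiating the displayed identity $k$ times in $t$ and inducting on $k$ then shows $\partial_t^k w(\cdot,0)\equiv 0$ for every $k$, because each term produced is a spatial-derivative operator applied to some $\partial_t^j w(\cdot,0)$ with $j\le k-1$. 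Thus every partial derivative of $w$ vanishes at $p$, so $w\equiv 0$ on $\Omega$, i.e. $Xu\equiv 0$. (Alternatively, one may just invoke the unique continuation property of second-order elliptic equations with zero Cauchy data on a hypersurface.) The only delicate point is the bookkeeping in this induction, but no genuine obstacle arises there.
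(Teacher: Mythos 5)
Your proof follows essentially the same route as the paper's: show $w:=Xu$ is again a $\lambda$-eigenfunction, use Lemma \ref{lem:commute} to get $\partial_r w = X(\partial_r u)=0$ on $M$ along with $w|_M=0$, and then conclude from vanishing Cauchy data along $M$. The only divergence is the final step: the paper cites Lemma \ref{lem:Cauchy-vanish} (Holmgren's uniqueness theorem), whereas you give a self-contained Cauchy--Kovalevskaya-style induction in Fermi coordinates to get infinite-order vanishing and then invoke real-analyticity; both are standard and the substance of the argument is the same.
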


\begin{proof}
Because $X$ is a Killing field, the function $Xu$ is a Laplace eigenfunction.  
Since $\nabla u=0$, we have $Xu(x) =0$ and $\partial_r u(x)=0$ for each 
$x \in M$.  Therefore, Lemma \ref{lem:commute} 
implies $\partial_r(Xu)(x) = X(\partial_r u)(x)=0$
for each $x \in M$.
Thus, since $Xu$ vanishes on the hypersurface $M$ we find that $\nabla (Xu)=0$.
Therefore, the claim follows from Lemma \ref{lem:Cauchy-vanish}.
\end{proof}

\begin{lem}
\label{lem:Cauchy-vanish}
Let $\Omega \subset \Rbb^d$ be an open set and let
$u : \Omega \to \Rbb$ satisfy $\Delta u = \lambda \cdot u$.
If $\sing(u)$ contains a hypersurface, then $u \equiv 0$.
\end{lem}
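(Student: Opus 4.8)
The plan is to exploit the fact that if $\sing(u)$ contains a hypersurface $M$, then $u$ vanishes to high order along $M$, and then invoke a unique continuation / Cauchy–Kovalevskaya-type argument to conclude that $u$ vanishes identically. First I would reduce to a local statement: since $\Omega$ is connected and the set where $u$ vanishes to infinite order is both open and closed (by real-analyticity of $u$), it suffices to show that $u$ vanishes to infinite order at a single point $x_0 \in M$, or indeed to show $u \equiv 0$ in a neighborhood of such a point. So fix $x_0 \in M$ and choose coordinates in which $M$ is locally the hyperplane $\{x_d = 0\}$.

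The key step is the Cauchy-data argument. By hypothesis, for $x \in M$ we have both $u(x) = 0$ (since $x \in \sing(u)$) and $\nabla u(x) = 0$; in particular the normal derivative $\partial_d u$ vanishes on $M$ as well. Thus $u$ solves $\Delta u = \lambda u$ with vanishing Cauchy data $u|_M = 0$, $\partial_d u|_M = 0$ along the non-characteristic hypersurface $M$. I would now argue by induction on $k$ that every partial derivative $\partial^\alpha u$ vanishes on $M$. The base cases $|\alpha| \le 1$ are the hypothesis. For the inductive step, any tangential derivative of a function vanishing on $M$ again vanishes on $M$, so it remains to handle one extra normal derivative: writing $\partial_d^2 u = \lambda u - \sum_{j<d}\partial_j^2 u$ from the PDE, one expresses $\partial_d^{m} u$ (and mixed derivatives) in terms of $u$ and its tangential derivatives of lower normal order, all of which vanish on $M$ by the inductive hypothesis. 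Hence all derivatives of $u$ vanish at $x_0$; since $u$ is real-analytic, $u \equiv 0$ near $x_0$, and then $u \equiv 0$ on all of $\Omega$.

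The main obstacle is making the inductive bookkeeping of the normal-derivative recursion clean — one must be careful that the recursion $\partial_d^2 = \lambda - \sum_{j<d}\partial_j^2$ only trades two normal derivatives for zero or two tangential ones, so that an induction on the number of normal derivatives (with tangential order unrestricted) actually closes. Alternatively, and perhaps more cleanly, one can bypass the explicit recursion entirely: the hypothesis gives that $u$ and $\nabla u$ vanish on $M$, which is exactly vanishing Cauchy data on a non-characteristic analytic hypersurface, so the Cauchy–Kovalevskaya uniqueness theorem (or Holmgren's theorem) applied to $\Delta u - \lambda u = 0$ immediately yields $u \equiv 0$ near $M$, and then connectedness of $\Omega$ together with real-analyticity finishes the proof. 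I would present the elementary inductive version but remark that it is just Cauchy–Kovalevskaya in disguise.
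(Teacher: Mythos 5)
Your fallback paragraph---invoking Holmgren (or Cauchy--Kovalevskaya uniqueness) for the elliptic operator $\Delta - \lambda$ with vanishing Cauchy data on the non-characteristic hypersurface $M$---is precisely the paper's proof, word for word in spirit. If you had led with that and stopped, the review would be a one-liner.

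The primary inductive argument as written, however, has a genuine gap. You ``choose coordinates in which $M$ is locally $\{x_d = 0\}$'' and then use the recursion $\partial_d^2 u = \lambda u - \sum_{j<d}\partial_j^2 u$. That identity is the Euclidean Laplace equation, but the coordinate change that flattens a \emph{curved} hypersurface is not a rigid motion, so in the new coordinates the operator $\Delta$ is no longer $\sum_j \partial_j^2$; it picks up variable coefficients and cross terms (e.g.\ flattening $x_d = \phi(x')$ via $y_d = x_d - \phi(x')$ turns the principal part into $(1+|\nabla'\phi|^2)\partial_{y_d}^2 + \cdots$). The hypersurface in the lemma is an arbitrary hypersurface in $\sing(u)$, so you cannot assume it is flat. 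The recursion can be repaired---one solves for $\partial_{y_d}^2 u$ using the nonvanishing coefficient $1+|\nabla'\phi|^2$ and inducts on the number of normal derivatives, with the inductive hypothesis closed under tangential differentiation and multiplication by analytic coefficients---but that is exactly the bookkeeping the classical Cauchy--Kovalevskaya uniqueness proof does, so you gain nothing over just citing it. Also worth noting: the set where $u$ vanishes to infinite order being both open and closed only gives $u\equiv 0$ on the connected component of $\Omega$ containing $M$; the lemma as stated (``$\Omega$ open'') should be read with $\Omega$ connected, as it is in every application in the paper.
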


\begin{proof}
Both $u$ and its normal derivative vanish along 
the hypersurface. The zero function lies in the 
kernel of $\Delta-\lambda \cdot I$.
Therefore the uniqueness theorem of 
Holmgren\footnote{See, for example, Theorem 6.4.3 in \cite{Taylor}.
One can also use the unique continuation theorem of H\"ormander
\cite{Hormander}  \cite{Tataru}.} 
implies that $u \equiv 0$.
\end{proof}

Let $G$ be a group of isometries. 
We will say that the action of $G$ is {\em codimension 1}
if the typical orbit is a hypersurface. 
For example, the standard action of $SO(n-k) \times \Rbb^{k-1}$ on 
$\Rbb^{n-k} \times \Rbb^{k-1}$ is codimension 1.

\begin{thm}
\label{thm:invariant}
Let $\Omega \subset \Rbb^d$ be open and connected, 
and let $M \subset \Omega$ be a hypersurface that is contained
in the typical orbit of a codimension one isometric group action.
If $M \subset \crit(u)$ and $\Delta u = \lambda u$, then 
$u$ is invariant under the action.
\end{thm}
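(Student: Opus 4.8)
The plan is to reduce the statement to a local computation and then deploy Proposition~\ref{prop:Killing-vanish}. Let $G$ be the isometric group acting with codimension-one orbits, and let $\mathfrak{g}$ denote its Lie algebra of Killing fields. The key observation is that $M$ is an open subset of a typical orbit $G\cdot p$, which is a hypersurface; hence the tangent space $T_xM$ at each $x \in M$ is spanned by the values $X(x)$ of the Killing fields $X \in \mathfrak{g}$. I would first show that every $X \in \mathfrak{g}$ is tangent to $M$: since the orbit through a point of $M$ is a hypersurface and $M$ is an open piece of it, the flow $\phi_t$ of $X$ maps $M$ into the same orbit, so $X(x) \in T_x(G\cdot p) = T_xM$ for $x\in M$. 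Strictly, one should be a little careful about whether $M$ lies in a single orbit — but connectedness of $M$ together with the fact that orbits are the leaves of a (singular) foliation whose leaf dimension is $d-1$ along $M$ forces $M$ to be contained in one orbit, since a connected $(d-1)$-manifold contained in the union of disjoint $(d-1)$-dimensional leaves lies in a single leaf.

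With tangency in hand, Proposition~\ref{prop:Killing-vanish} applies directly: for each $X \in \mathfrak{g}$, the function $Xu$ is a Laplace eigenfunction, $Xu$ and $\partial_r u$ vanish on $M$, Lemma~\ref{lem:commute} gives $\partial_r(Xu) = X(\partial_r u) = 0$ on $M$, so $\nabla(Xu)$ vanishes on $M$, and then Lemma~\ref{lem:Cauchy-vanish} forces $Xu \equiv 0$ on $\Omega$. (One minor point: Proposition~\ref{prop:Killing-vanish} is stated for an \emph{oriented} hypersurface $M$; since tangency and the conclusion are local and orientability holds on small enough neighborhoods, this causes no trouble — or one simply notes $M$, being a typical orbit of a connected group or a piece thereof, may be taken orientable after shrinking.) Thus $Xu \equiv 0$ for every $X \in \mathfrak{g}$.

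Finally I would pass from "$Xu \equiv 0$ for all $X \in \mathfrak{g}$" to "$u$ is $G$-invariant." This is a standard fact: if $u$ is annihilated by every generator of a connected Lie group action, then $u$ is constant along every orbit, because $\frac{d}{dt} u(\phi_t^X(x)) = (Xu)(\phi_t^X(x)) = 0$, and orbits of a connected group are generated by such flows; if $G$ has several components one notes that the component group is generated near the identity and invoke connectedness of $\Omega$, or simply replace $G$ by its identity component, which still acts with codimension-one orbits. Hence $u$ is invariant under the $G$-action, which is the claim.

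The main obstacle I anticipate is not any single deep step but rather the bookkeeping around orbits: verifying that the hypersurface $M$ genuinely sits inside a \emph{single} typical orbit (so that its tangent spaces are exactly spanned by Killing fields), and handling the possibility that the action is not free or that $G$ is disconnected. Once the tangency of all of $\mathfrak{g}$ to $M$ is nailed down, the analytic content is entirely outsourced to Proposition~\ref{prop:Killing-vanish} and its supporting lemmas, and the remainder is the elementary flow-line argument for invariance.
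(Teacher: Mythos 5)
Your proposal is correct and follows essentially the same strategy as the paper: pick Killing fields tangent to $M$, apply Proposition~\ref{prop:Killing-vanish} together with Lemmas~\ref{lem:commute} and~\ref{lem:Cauchy-vanish} to conclude $Xu \equiv 0$, and then integrate along flow lines to get invariance. The one unnecessary worry is whether $M$ lies in a single orbit --- the theorem's hypothesis already states $M$ is contained in \emph{the} typical orbit, so no foliation argument is needed; otherwise your extra care about orientability and the identity component of $G$ is harmless and fills in details the paper leaves implicit.
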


\begin{proof}
Let $x \in M$. Since $G \cdot x$ contains $M$ there are Killing fields 
$X_1, \ldots, X_{n-1},$ such that the tangent space at $x$ is spanned by 
$\{X_1(x), \ldots, X_{n-1}(x)\}$. Proposition  \ref{prop:Killing-vanish}
implies that $X_j u \equiv 0$ for each $j$. Since the $X_j$ span
the tangent space of an orbit, the function $u$ is constant 
on the orbit.
\end{proof}

\begin{eg} 
\label{eg:hyperplane}
Suppose that the hypersurface $M$ belongs to a hyperplane in $\Rbb^d$.
The hyperplane is the orbit of an isometric action of $\Rbb^{n-1}$
via translations parallel to the hyperplane.
Theorem \ref{thm:invariant} implies that $u = v \circ r$
for some $v: \Rbb \to \Rbb$.
Since $\Delta u = \lambda u$ and $\nabla u=0$ along $M$, 
we find that $v$ is a multiple of $ t \mapsto \cos(\sqrt{\lambda} \cdot t )$.
\end{eg}

\begin{eg}
Suppose that the hypersurface $M$ belongs to an $n-1$ dimensional 
sphere in $\Rbb^d$. The sphere is the orbit of an isometric action of 
the orthogonal group $O(n)$. Theorem \ref{thm:invariant} implies 
that $u = v \circ r$ for some $v: \Rbb \to \Rbb$
that satisfies an explicit second order ordinary differential
equation. In particular, $v$ is a Bessel function.
\end{eg}


\section{Critical sets of second Neumann eigenfunctions}
\label{sec:2nd-Neumann}

In this section, we suppose that $u$ is a second Neumann 
eigenfunction on a simply connected domain, and derive
consequences.  

\begin{prop}
\label{coro:lem-2nd-no-hypersurface}
Let $\Omega \subset \Rbb^d$ be a simply connected, 
bounded, open set with Lipschitz boundary, 
and let $u: \Omega \to \Rbb$ be a nonconstant second Neumann eigenfunction.
Then $\crit(u)$ does not contain a hypersurface.
\end{prop}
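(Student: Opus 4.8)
The plan is to argue by contradiction using the variational characterization of the second Neumann eigenvalue together with the structural results already established. Suppose $\crit(u)$ contains a hypersurface $M$; by passing to the connected component of $\crit(u)$ containing $M$ and invoking Theorem \ref{thm:crit-structure-higher}, we may assume $M$ is a proper smooth analytic hypersurface of $\Omega$. Since $\Omega$ is simply connected, Proposition \ref{prop:at-least-two-components} applies and yields that $\Omega \setminus u^{-1}(0)$ has at least three connected components, say $V_1, V_2, V_3$.

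The next step is to contradict the fact that $\lambda$ is the \emph{second} Neumann eigenvalue, i.e.\ $\lambda = \lambda_2(\Omega)$ and $\lambda_2$ has the min-max characterization
\[
\lambda_2(\Omega) = \min_{\substack{W \subset H^1(\Omega) \\ \dim W = 2}} \ \max_{0 \neq f \in W} \frac{\int_\Omega |\nabla f|^2}{\int_\Omega f^2},
\]
where here one should be slightly careful: the first Neumann eigenfunction is constant with eigenvalue $0$, so $\lambda_2$ is the smallest \emph{positive} eigenvalue, and the competing $2$-dimensional subspaces $W$ should contain the constants, or equivalently one minimaxes over functions orthogonal to constants. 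I would build a test function supported on the nodal-type pieces $V_i$. Because on each $V_i$ the function $u$ has constant sign and $u = 0$ on $\partial V_i \cap \Omega$ (the part of the boundary interior to $\Omega$), the truncations $u_i := u \cdot \mathbf{1}_{V_i}$ lie in $H^1(\Omega)$ and satisfy $\int_\Omega \nabla u_i \cdot \nabla \phi = \lambda \int_\Omega u_i \phi$ for test functions $\phi$ supported in $V_i$; in particular $\int_{V_i} |\nabla u_i|^2 = \lambda \int_{V_i} u_i^2$ so each $u_i$ is a "Rayleigh-quotient-$\lambda$" function with disjoint supports. From three such functions $u_1, u_2, u_3$ one can form a $2$-dimensional subspace of the orthogonal complement of the constants (after subtracting off the mean of a suitable linear combination, using the extra degree of freedom among three functions) on which the Rayleigh quotient is $\le \lambda$, with equality forcing each nonzero combination to be an eigenfunction; a standard unique-continuation argument then shows this is impossible since an eigenfunction cannot vanish on an open set. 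This contradicts minimality in the min-max, or more precisely shows $\lambda_2 < \lambda$ or produces an eigenfunction vanishing on an open set, either way a contradiction.

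I expect the main obstacle to be the bookkeeping around the constants: the min-max for $\lambda_2$ is over subspaces containing (or transverse to) the span of the constant function, so I cannot simply take the span of two of the $u_i$. The clean way is: the three functions $u_1, u_2, u_3$ together with the constant function $\mathbf 1$ span a subspace of dimension $\le 4$; within $\mathrm{span}(u_1,u_2,u_3)$ the codimension-one subspace of functions with zero mean is at least $2$-dimensional, and every element $f$ of it satisfies $\int_\Omega |\nabla f|^2 \le \lambda \int_\Omega f^2$ by disjointness of supports, with equality iff $f$ restricted to each $V_i$ is either $0$ or a scalar multiple of $u$. Plugging this $2$-dimensional subspace into the variational principle gives $\lambda_2(\Omega) \le \lambda = \lambda_2(\Omega)$, so equality holds throughout, meaning there is a genuine second eigenfunction $f$ that vanishes identically on some $V_i$ (one can arrange at least one coefficient to vanish, since we have three $V_i$'s and only impose one mean-zero constraint, leaving a free coefficient). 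But a Neumann eigenfunction vanishing on a nonempty open set must vanish identically by Holmgren/unique continuation (Lemma \ref{lem:Cauchy-vanish} in spirit, or \cite{Hormander}), a contradiction. Hence no such hypersurface $M$ exists.

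A secondary technical point worth flagging is the regularity/boundary behavior needed to conclude $u_i \in H^1(\Omega)$ and that the weak equation holds across $M$: since $\nabla u$ vanishes on $M$, gluing $u|_{V_i}$ with $0$ across $M$ produces no singular contribution to the distributional gradient, which is exactly the content used in Proposition \ref{prop:at-least-three-components}. With that in hand the argument above is clean, and the only real work is the elementary linear algebra ensuring a mean-zero combination that kills one $V_i$.
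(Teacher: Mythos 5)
Your argument is correct, and it reaches the paper's intermediate conclusion the same way: both you and the authors invoke Proposition \ref{prop:at-least-two-components} (which itself uses Theorem \ref{thm:crit-structure-higher}) to conclude that $\Omega \setminus u^{-1}(0)$ has at least three connected components. Where you diverge is in how you turn this into a contradiction. The paper simply cites Courant's nodal domain theorem, which says a second Neumann eigenfunction has exactly two nodal domains, and stops there. You instead re-prove the relevant case of Courant from scratch: truncating $u$ to the nodal pieces $V_1,V_2,V_3$ to get three disjointly supported $H^1$ functions each with Rayleigh quotient $\lambda$, finding a nontrivial mean-zero combination with one coefficient killed, observing that the min-max characterization of $\lambda_2$ then forces this combination to be a genuine $\lambda_2$-eigenfunction vanishing on an open set, and finishing by unique continuation. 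This is exactly the standard proof of Courant's theorem specialized to $\lambda_2$ and three nodal domains, so it is sound; the paper's route is shorter because it outsources that work to the citation, while yours is self-contained and makes explicit precisely which consequence of Courant is being used. One small inaccuracy worth noting: you say the gluing ``across $M$'' is harmless because $\nabla u$ vanishes on $M$, but the truncations $u_i$ are glued across $u^{-1}(0)$ (where $u$, not necessarily $\nabla u$, vanishes); that $u_i \in H^1(\Omega)$ is the standard nodal-domain truncation fact and has nothing to do with $M$. The role of $\nabla u|_M = 0$ is confined to Propositions \ref{prop:at-least-three-components} and \ref{prop:at-least-two-components}, which you correctly use as a black box.
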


\begin{proof}
Courant's nodal domain theorem implies that the set 
$\Omega \setminus u^{-1}(0)$ has exactly two components.
If $\crit(u)$ were to contain an hypersurface,
then Proposition \ref{prop:at-least-two-components}
would imply that $\Omega \setminus u^{-1}(0)$ has at least three components.
\end{proof}

If a portion of the boundary of $\Omega$ is sufficiently regular, then 
each Neumann eigenfunction extends in a $C^1$ fashion to a portion 
of the boundary,
and hence one can extend the notion of critical point 
of $u$ on the boundary.  We will let $\ocrit(u)$ denote 
the set of critical points of such an extension if it exists.

\begin{prop}
\label{prop:face-crit-rect}
Let $\Omega \subset \Rbb^d$ be a bounded, simply connected domain
with piecewise analytic boundary,
and let $u: \Omega \to \Rbb$ be a second Neumann eigenfunction. 
If $\ocrit(u)$ contains a hypersurface $M$ that lies in a hyperplane $H$, 
then there exists a domain $D \subset H$ such that 
$\Omega$ is the intersection of the cylinder over $D$ and the
convex hull of $H \cup H'$ where $H'$ is a hyperplane parallel 
to $H$ such that $\dist(H,H')= \pi/ \sqrt{\lambda}$.
\end{prop}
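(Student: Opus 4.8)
The plan is to use Example~\ref{eg:hyperplane} as the local engine and then bootstrap from the portion of $\partial\Omega$ lying in $H$ to the global statement. Since $M \subset \ocrit(u)$ lies in a hyperplane $H$ and $\Delta u = \lambda u$ with the Neumann condition, I would first argue that $\lambda > 0$ (a nonconstant Neumann eigenfunction has $\lambda \neq 0$, and the sign will come out of the analysis below, or one notes that near $M$ the boundary piece in $H$ is a Neumann boundary so $u$ reflects evenly across $H$ and one is in the situation of Theorem~\ref{thm:invariant} applied to the doubled eigenfunction). Reflecting $u$ across $H$ produces an eigenfunction $\tilde u$ on a neighborhood of $M$ in $\Rbb^d$ (not just in $\Omega$), with $M$ a critical hypersurface in the interior of the doubled domain. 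Now Theorem~\ref{thm:invariant}, via Example~\ref{eg:hyperplane}, applies: on the connected component of this neighborhood, $\tilde u = c\cos(\sqrt\lambda\, r)$ where $r = \dist(\cdot, H)$ (signed). In particular $\lambda > 0$.

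Next I would propagate this local formula. Let $r$ denote signed distance to $H$, oriented so that $\Omega$ lies (locally near $M$) on the side $r > 0$. The set where the identity $u = c\cos(\sqrt\lambda\, r)$ holds is closed (by continuity) and, by the analytic-continuation argument used in Theorem~\ref{thm:crit-structure-higher} together with the fact that $\cos(\sqrt\lambda\,r)$ is itself a Laplace eigenfunction, it is also open in $\Omega$: wherever the identity holds on an open piece, the difference $u - c\cos(\sqrt\lambda\,r)$ is an eigenfunction vanishing on an open set, hence vanishes on the connected component. Since $\Omega$ is connected, $u \equiv c\cos(\sqrt\lambda\, r)$ on all of $\Omega$ — provided the coordinate $r$ makes sense, i.e. provided $\Omega$ stays within the slab where signed distance to $H$ is smooth. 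This forces $u$ to vanish exactly on the hyperplanes at distance $\pi/(2\sqrt\lambda) + k\pi/\sqrt\lambda$ from $H$; Courant's theorem (as in Proposition~\ref{coro:lem-2nd-no-hypersurface}) says $\Omega \setminus u^{-1}(0)$ has exactly two components, so $\Omega$ meets exactly one such nodal hyperplane, and it cannot reach a second critical hyperplane on the far side. The far critical hyperplane is $H' = \{r = \pi/\sqrt\lambda\}$, and $\Omega$ is confined to $0 \le r \le \pi/\sqrt\lambda$, i.e. to the convex hull (the slab) of $H \cup H'$.

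It remains to identify the cross-sectional constraint, namely $D \subset H$ with $\Omega = (\text{cylinder over }D) \cap (\text{slab between }H, H')$. Because $u$ depends only on $r$, every point of $\partial\Omega$ not on $H$ or $H'$ must carry the Neumann condition $\partial_\nu u = 0$; since $\nabla u = -c\sqrt\lambda\sin(\sqrt\lambda\,r)\,\partial_r$ is parallel to $\partial_r$ and nonzero for $0 < r < \pi/\sqrt\lambda$, the outward normal $\nu$ there must be orthogonal to $\partial_r$, i.e. parallel to $H$. Hence the lateral boundary of $\Omega$ consists of hypersurfaces whose normals are everywhere horizontal, which means $\partial\Omega \cap \{0 < r < \pi/\sqrt\lambda\}$ is a cylinder: it is swept out by the $\partial_r$-flow. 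Setting $D$ to be the (open) set in $H$ obtained by projecting this lateral boundary, one gets $\Omega = (D \times (0,\pi/\sqrt\lambda))$ realized as the stated intersection. One should check $D$ is a domain (connected, open): connectedness follows from connectedness of $\Omega$ and the product structure.

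The main obstacle I anticipate is the step asserting that signed distance to $H$ is a global smooth coordinate on $\Omega$ and that $\Omega$ is literally a product — a priori $\Omega$ could wrap around or the nearest point in $H$ could fail to be unique. The clean way around this is to run the open–closed argument only within the slab $\{|r| < \pi/\sqrt\lambda\}$ where $r$ is automatically smooth once we know the formula $u = c\cos(\sqrt\lambda\,r)$ forces $\nabla u$ to be nonvanishing (hence no critical hypersurface, hence no nodal crossing) until $r$ reaches $\pi/(2\sqrt\lambda)$; Courant then caps the extent of $\Omega$ before $r$ can exit the good slab. Making this confinement rigorous — ruling out that $\Omega$ has a component reaching $r \ge \pi/\sqrt\lambda$ by a path that leaves the slab and re-enters — is where the simple-connectivity and Lipschitz hypotheses, together with Proposition~\ref{prop:at-least-two-components}, do the real work, and that is the part of the write-up I would be most careful with.
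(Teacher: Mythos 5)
Your route is essentially the paper's: reflect across $H$, invoke Example~\ref{eg:hyperplane} (via Theorem~\ref{thm:invariant}) to get $u = c\cos\bigl(\sqrt\lambda\,\dist(\cdot,H)\bigr)$, confine $\Omega$ to a single slab using the two-nodal-domain bound, and read off the cylinder structure from the Neumann condition on the lateral boundary. Two remarks. First, you assume from the outset that $M$ is a piece of the Neumann boundary, but $\ocrit(u)$ was defined to include interior critical points as well, so before reflecting you need the paper's opening step: Proposition~\ref{coro:lem-2nd-no-hypersurface} rules out a critical hypersurface in the interior, hence $M \subset \partial\Omega$. This is a genuine (if small) gap in the logic as written. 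Second, the paper constrains $\Omega$ to a slab by noting that any lateral boundary point not perpendicular to $H$ forces $\dist(x,H) = k\pi/\sqrt\lambda$ and then applying Proposition~\ref{coro:lem-2nd-no-hypersurface} again (a second interior critical hyperplane would otherwise appear); you instead count nodal domains via Courant directly. These are equivalent since Proposition~\ref{coro:lem-2nd-no-hypersurface} is itself proved from Courant, so this is not a different approach, just a slightly different framing. Your worries about global smoothness of signed distance and about the open-closed propagation are legitimate points that the paper leaves implicit; spelling them out as you do makes the argument more careful without changing it.
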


\begin{proof}
Proposition \ref{coro:lem-2nd-no-hypersurface} implies that
the hypersurface $M$ cannot lie in the interior of $\Omega$.
Hence $M$ lies $\partial \Omega$. Because $u$
satisfies Neumann conditions, the eigenfunction $u$ extends
via reflection to a neighborhood $U$ of a point of $H$, and 
hence we are in the situation of Example \ref{eg:hyperplane}. In particular,
$u$ is a multiple of $x \mapsto \cos(\sqrt{\lambda} \cdot \dist(x,U))$ 
Therefore, since $u$ satisfies Neumann conditions, if the vector $v$
with footpoint $x$ is tangent to $\partial \Omega$, then either 
$v$ is orthogonal to $H$ or $\dist(x, H)= k\pi/\sqrt{\lambda}$ for some $k \in \Zbb$.
Proposition \ref{coro:lem-2nd-no-hypersurface} implies that at most two 
distinct integers $k$ can appear and that they are successive integers.
The claim follows.
\end{proof}

A similar argument gives the following. 

\begin{prop}
\label{prop:face-crit-spherical}
Let $\Omega \subset \Rbb^d$ be a bounded, simply connected domain
with piecewise analytic boundary,
and let $u: \Omega \to \Rbb$ be a second Neumann eigenfunction. 
If $\ocrit(u)$ contains a hypersurface $M$ that lies in a hypersphere $S$, 
then there exists a domain $D \subset S$ such that $\Omega$ 
is the intersection of the cone on $D$ and a spherical shell 
whose boundary contains $S$. 
\end{prop}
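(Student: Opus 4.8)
The plan is to follow the structure of the proof of Proposition \ref{prop:face-crit-rect}, replacing the role of the hyperplane $H$ by the hypersphere $S$ and the role of translations by the action of the orthogonal group $O(d)$ fixing the center of $S$. First I would observe that Proposition \ref{coro:lem-2nd-no-hypersurface} forces $M$ to lie in $\partial\Omega$ rather than in the interior of $\Omega$, exactly as before. Since the boundary is piecewise analytic and $M$ lies in a hypersphere $S$, the Neumann condition lets $u$ extend by reflection across $S$ (reflection in $S$ being an isometry of a neighborhood of a point of $S$ once we pass to the appropriate analytic coordinates, or more simply: reflect across the tangent hyperplane after composing with the exponential map of the distance function to $S$, using the real-analyticity supplied by Cauchy--Kovalevskaya). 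This puts us in the setting of the second spherical example following Theorem \ref{thm:invariant}: $u = v \circ r$ where $r$ is the signed distance to $S$ and $v$ is the solution of the radial Bessel-type ODE determined by $\Delta u = \lambda u$ together with the boundary data $v'(0)=0$ coming from the Neumann condition along $M$.

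Next I would exploit the Neumann condition on the rest of $\partial\Omega$. Since $u = v\circ r$, the gradient $\nabla u$ is a (nonzero, away from critical spheres) multiple of $\partial_r$, which is radial from the center $c$ of $S$. So a vector $w$ with footpoint $x$ is tangent to $\partial\Omega$ and satisfies the Neumann condition $\nabla u \cdot w = 0$ precisely when either $w \perp \partial_r(x)$ (i.e., $w$ is tangent to the sphere through $x$ concentric with $S$) or $v'(r(x)) = 0$. The zeros of $v'$ are the radii at which $u$ has a critical sphere; by Proposition \ref{coro:lem-2nd-no-hypersurface} at most two such radii can occur among the points of $\overline{\Omega}$, and (arguing as in Proposition \ref{prop:face-crit-rect} via the nodal-domain count, or directly from the oscillation of Bessel functions) they must be consecutive zeros of $v'$, giving the inner and outer radii $\rho_-\le \rho_+$ of a spherical shell $\Sigma$ whose middle sphere is $S$. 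The dichotomy above says that each boundary point of $\Omega$ either lies on one of the two bounding spheres of $\Sigma$, or lies on a ray from $c$ that $\partial\Omega$ meets orthogonally to the radial direction — i.e., $\partial\Omega$ is, away from the two capping spheres, ruled by radial segments. Setting $D := \{\, x/|x-c| : x \in \Omega\,\} \subset S$ (radial projection to $S$), this forces $\Omega$ to be the intersection of the cone on $D$ with the shell $\Sigma$, which is the claim.

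The main obstacle I expect is making the reflection/extension step across a curved hypersurface fully rigorous: reflection in a sphere is a conformal, not isometric, map, so one cannot literally invoke Neumann reflection as in the flat case. The clean fix is to work intrinsically, as the paper already does: use the signed distance function $r$ to $S$, note $\partial_r$ extends past $S$, extend $u$ across $S$ by the even reflection $u(\exp_{x}(-t\partial_r)) := u(\exp_x(t\partial_r))$ for $x\in S$, and check that this extension still solves $\Delta u = \lambda u$ on a two-sided neighborhood — this holds because $u=v\circ r$ near $S$ with $v$ even (forced by $v'(0)=0$ and the ODE), so the extended function is literally $v\circ r$ on the larger domain and automatically an eigenfunction there. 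A secondary point requiring a little care is justifying that exactly two, and consecutive, critical radii can occur: this needs the analogue for $v$ of the simple oscillation argument used for $\cos(\sqrt\lambda\, t)$, together with the fact (Courant, as in Proposition \ref{coro:lem-2nd-no-hypersurface}) that a third critical sphere would produce a third nodal domain. Once these two technical points are in place, the geometric conclusion — cone on $D$ intersected with a shell containing $S$ — follows formally from the Neumann dichotomy exactly as the cylinder-times-slab conclusion did in Proposition \ref{prop:face-crit-rect}.
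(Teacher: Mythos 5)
The paper gives no separate proof of Proposition~\ref{prop:face-crit-spherical}; it only says ``A similar argument gives the following,'' pointing back to the proof of Proposition~\ref{prop:face-crit-rect}. Your proposal correctly reproduces that structure---$M$ is forced into $\partial\Omega$ by Proposition~\ref{coro:lem-2nd-no-hypersurface}, then $u$ is radial about the center of $S$ via the second example following Theorem~\ref{thm:invariant}, and then the Neumann condition on the rest of $\partial\Omega$ splits the boundary into radial cone pieces and critical spheres, with at most two consecutive critical radii by another appeal to Proposition~\ref{coro:lem-2nd-no-hypersurface}. You also correctly flag the one wrinkle that ``a similar argument'' hides: in the hyperplane case the paper extends $u$ by reflection across $H$, which is a Euclidean isometry, whereas reflection across a sphere is conformal rather than isometric, so that step does not transfer verbatim.

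Your proposed fix for that wrinkle, however, is circular. You justify the even extension $u(\exp_x(-t\partial_r)) := u(\exp_x(t\partial_r))$ by asserting ``this holds because $u = v\circ r$ near $S$ with $v$ even''; but $u = v\circ r$ is precisely what the extension is needed to prove, since Theorem~\ref{thm:invariant} is stated for $M$ inside an \emph{open} set where $u$ is defined, not for $M\subset\partial\Omega$. The non-circular route is to notice that the engine of Theorem~\ref{thm:invariant} is Holmgren's uniqueness theorem (via Lemma~\ref{lem:Cauchy-vanish} and Proposition~\ref{prop:Killing-vanish}), and Holmgren applies just as well to Cauchy data posed on a non-characteristic piece of the boundary. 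Concretely: since $\nabla u$ vanishes on the connected set $M$, the restriction $u|_M$ is a constant $c$, and the Neumann condition gives $\partial_n u|_M = 0$; let $w := v\circ r$ with $v$ solving the radial Bessel ODE and $v(0)=c$, $v'(0)=0$; then $\Delta w = \lambda w$, and $u-w$ has vanishing Cauchy data along $M$, so Holmgren and analytic continuation give $u \equiv w$ on $\Omega$, with no reflection needed. (The hyperplane reflection in Proposition~\ref{prop:face-crit-rect} is a convenient shortcut that happens to be available there, not an essential ingredient.) Two further small corrections: since $M\subset\partial\Omega$, the domain $\Omega$ lies on one side of $S$, so $S$ must be one of the two \emph{bounding} spheres of the shell, not its ``middle sphere''; and the Neumann dichotomy should be phrased in terms of the outward normal $n$ to $\partial\Omega$ (either $v'(r(x))=0$ or $n(x)\perp\partial_r(x)$), rather than an arbitrary tangent vector $w$.
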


The following finishes the proof of Theorem \ref{thm:rectangle}
in the introduction. 

\begin{thm}
\label{thm:rectangle-interior}
Let $P \subset \Rbb^2$ be a bounded polygonal domain, 
and let $u: P \to \Rbb$
be a second Neumann eigenfunction. If $\ocrit(u)$ is not finite,
then $P$ is a rectangle. 
\end{thm}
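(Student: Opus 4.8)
The strategy is to reduce the statement to the two structural results just proved, Propositions~\ref{prop:face-crit-rect} and~\ref{prop:face-crit-spherical}, by showing that an infinite critical set on a polygon forces a critical hypersurface (here, a curve) lying on a line or a circle. First I would observe that $\ocrit(u)$ is a real-analytic subvariety of $\oP$: by Neumann reflection across each open side, $u$ extends real-analytically across the (relative) interior of every side, so $\nabla u$ is real-analytic there, and in particular $\ocrit(u)$ restricted to a neighborhood of any open side is a real-analytic variety. Since $\oP$ is compact and covered by finitely many such neighborhoods together with the interior of $P$, the set $\ocrit(u)$ is a compact real-analytic variety. A compact real-analytic variety with infinitely many points cannot be discrete, hence it contains a point $p$ with a positive-dimensional local branch; in dimension two this branch is a real-analytic arc $\gamma$ through $p$.

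Next I would locate where this arc lives. By Proposition~\ref{coro:lem-2nd-no-hypersurface}, $\crit(u)$ contains no hypersurface in the interior of $P$, so the arc $\gamma$ cannot lie in the open set $P$; thus $\gamma \subset \partial P$, and after possibly shrinking it, $\gamma$ lies in the relative interior of a single side $e$ of $P$, so $\gamma$ is a subsegment of the line $H$ containing $e$. (One must check that a critical arc cannot ``run along'' a sequence of sides meeting at vertices; but finitely many vertices cannot support infinitely many points of a one-dimensional analytic set, so the arc is genuinely a line segment.) Now $\gamma$ is a critical hypersurface in $\ocrit(u)$ lying in the hyperplane $H$, so Proposition~\ref{prop:face-crit-rect} applies: there is a domain $D \subset H$ (here an interval) with $P$ equal to the intersection of the strip over $D$ with the slab between $H$ and a parallel line $H'$ at distance $\pi/\sqrt{\lambda}$. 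In the plane, the cylinder over an interval $D$ is itself a strip, and intersecting two strips with non-parallel boundary directions, one bounded by $H$ and $H'$ and the other by the two ``ends'' of $D$, yields a parallelogram; since the two strips are mutually orthogonal (the ends of $D$ are segments in $H$, and $u$ being a function of $\dist(\cdot,H)$ forces the remaining sides to be orthogonal to $H$ in order to satisfy the Neumann condition), $P$ is a rectangle. One should also confirm that $P$ is simply connected, which is part of the hypothesis in the cited propositions; a bounded polygonal \emph{domain} in the plane is connected, and to invoke Proposition~\ref{coro:lem-2nd-no-hypersurface} we additionally need simple connectivity --- this is the one hypothesis I would need to either assume or derive, and I expect the paper intends ``polygonal domain'' to be simply connected here, matching Theorem~\ref{thm:rectangle}.

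The main obstacle is the passage from ``$\ocrit(u)$ is infinite'' to ``$\ocrit(u)$ contains an analytic arc on a single line.'' The subtlety is twofold: one must rule out the critical points accumulating \emph{at a vertex} of $P$ (where $u$ need not extend analytically and the reflection argument breaks down), and one must rule out an infinite discrete set of \emph{interior} isolated critical points --- precisely the phenomenon that, by the remark in the introduction citing \cite{BLS}, \emph{can} occur for variable-coefficient operators. The resolution is that for the flat Laplacian on a polygon, $\ocrit(u)$ away from the finitely many vertices is a genuine real-analytic variety, and a real-analytic variety in $\Rbb^2$ is locally either a finite union of arcs or a discrete set of points; a discrete analytic set in a neighborhood of a compact set is finite. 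Hence an infinite $\ocrit(u)$ must either accumulate at a vertex or contain an arc. The first case I would handle by a local analysis near the vertex: near a vertex of interior angle $\alpha$, $u$ admits an expansion in terms of $r^{k\pi/\alpha}\cos(k\pi\theta/\alpha)$ (in polar coordinates centered at the vertex, with Neumann conditions on both sides), and one checks that $\nabla u$ cannot vanish on a sequence tending to the vertex unless $u$ vanishes identically near the vertex, contradicting non-constancy; alternatively, the critical points near a vertex lie on the nodal-type analytic variety of the leading homogeneous part, which near an isolated vertex cannot be infinite without forcing a critical arc emanating from the vertex, returning us to the previous case. Once the arc is produced, everything else is a direct citation of Propositions~\ref{coro:lem-2nd-no-hypersurface} and~\ref{prop:face-crit-rect} plus the elementary planar geometry of intersecting two orthogonal strips.
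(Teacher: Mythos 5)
Your overall strategy matches the paper's: extend $u$ by reflection across the sides, show that an infinite $\ocrit(u)$ forces a critical arc lying on a single side, and then invoke Propositions~\ref{coro:lem-2nd-no-hypersurface} and~\ref{prop:face-crit-rect}. You also correctly flag that simple connectivity is needed (the hypothesis is implicit in the statement of Theorem~\ref{thm:rectangle-interior} and is supplied in Theorem~\ref{thm:rectangle}), and your description of the planar geometry in the final step is fine.

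The gap is in the vertex-accumulation case, which you correctly identify as the crux but do not actually handle. Your proposed resolution --- ``one checks that $\nabla u$ cannot vanish on a sequence tending to the vertex unless $u$ vanishes identically near the vertex'' --- is false as stated, and the alternative (``the nodal-type variety of the leading homogeneous part'') does not work either, because the critical points near a vertex are governed by a competition between leading and subleading terms in the Fourier--Bessel expansion, not by the leading term alone. For a general Neumann eigenfunction (not necessarily the second, or on a non-simply-connected domain) there is no such obstruction. The paper's Lemma~\ref{lem:no-vertex-limit} is a substantive result: it writes $u$ near the vertex as $\sum_k c_k \, r^{k\nu} g_{k\nu}(r^2)\cos(k\nu\theta)$ with $\nu = \pi/\beta$, invokes Corollary~5.3 of \cite{Annals} (which is precisely where simple connectivity and ``second eigenfunction'' enter) to conclude $c_0 \neq 0$ or $c_1 \neq 0$, and then shows that an accumulating sequence of critical points would force $c_1 = 0$ and subsequently $c_0 = 0$. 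The derivation of $c_0 = 0$ splits into three cases according to the sign of $k\nu - 2$ (where $k$ is the first nonzero index beyond $1$), and the borderline case $\nu = 2/3$, $\beta = 3\pi/2$ requires a separate argument establishing uniform convergence of the Bessel series via Bessel's inequality and monotonicity of $J_\mu$ on $[0,\mu]$, so that $f(s,t)$ is analytic up to the corner. None of this is present in your sketch, and without it the theorem does not follow. The rest of your argument (applying Propositions~\ref{coro:lem-2nd-no-hypersurface} and~\ref{prop:face-crit-rect} once an arc is produced away from the vertices) is correct and essentially the same as the paper's.
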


\begin{proof}
Via reflection across its sides, one may extend $u$ to an eigenfunction
$\tu$ defined on an open set $\Omega$ that contains $\oP \setminus V$.
Suppose that $\crit(u)$ is not finite, and let 
$p$ be an accumulation point.  
Lemma \ref{lem:no-vertex-limit} below implies that $p$ is not a vertex of $P$. 
Thus $p$ lies in the interior of $\Omega$ and hence $p$ 
is a critical point of $\tu$. Because $|\nabla \tu|^2$ 
is real-analytic, the critical set is a locally finite graph.\footnote{See,
for example, the proof of Proposition 5 in \cite{Otal-Rosas}.}
Thus, since $p$ is not an isolated critical point, there exists 
an arc in $\crit(\tu)$. Proposition \ref{coro:lem-2nd-no-hypersurface}
implies that this arc cannot intersect the interior of $P$. 
Because $\tu$ is defined on the exterior of $P$ via by reflection
in the sides, the arc also cannot intersect the exterior of $\oP$.
Hence the arc lies in a side of $P$, and Proposition \ref{prop:face-crit-rect}
implies that $P$ is a rectangle.
\end{proof}

The following Lemma generalizes Proposition 5.6 in \cite{Annals},
a result that assumed the convexity of the polygonal domain.

\begin{lem}
\label{lem:no-vertex-limit}
Let $P$ be a bounded, simply connected, polygonal
domain\footnote{In particular, $P$ is an open set.} 
and let $u: P \to \Rbb$ be a second Neumann eigenfunction.
No vertex $v$ of $P$ is an accumulation point of $\crit(u)$. 
\end{lem}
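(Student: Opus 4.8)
The plan is to analyze the behavior of a second Neumann eigenfunction $u$ near a vertex $v$ of $P$ using the standard local expansion in polar coordinates, and to show that if $v$ were an accumulation point of $\crit(u)$, then the leading term of this expansion would force either $u$ to be constant or the presence of a critical hypersurface, contradicting Proposition \ref{coro:lem-2nd-no-hypersurface}. Concretely, let $2\alpha$ be the interior angle of $P$ at $v$, place $v$ at the origin, and work in polar coordinates $(\rho, \theta)$ with the two sides meeting $v$ along $\theta = 0$ and $\theta = 2\alpha$. After reflecting across the two sides, $u$ extends to (a function on) a full punctured neighborhood of the origin when $\pi/\alpha$ is rational, and in general the Neumann eigenfunction has a convergent expansion near $v$ of the form $u(\rho,\theta) = \sum_{k \geq 0} c_k\, J_{k\pi/(2\alpha)}(\sqrt{\lambda}\rho)\,\cos(k\pi\theta/(2\alpha))$, where not all $c_k$ with $k\geq 1$ vanish unless $u$ is radial about $v$. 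The first nonzero coefficient $c_{k_0}$ (with $k_0 \geq 1$) governs the leading-order behavior of $\nabla u$ near $v$: in a punctured neighborhood, $\nabla u \sim c_{k_0}\cdot \text{const}\cdot \rho^{k_0\pi/(2\alpha) - 1}(\cos, \sin)(k_0\pi\theta/(2\alpha))$ in suitable coordinates, so the only zeros of $\nabla u$ at small $\rho$ occur along the finitely many rays $\theta$ where this leading vector vanishes.

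The key steps, in order, are: (1) establish the polar-coordinate series expansion for the Neumann eigenfunction near the vertex, using separation of variables and the fact that the only solutions of the angular problem compatible with Neumann data on both sides are $\cos(k\pi\theta/(2\alpha))$; (2) identify the lowest-order term $k_0 \geq 1$ with $c_{k_0} \neq 0$, and note that the case "all $c_k = 0$ for $k\geq 1$" means $u = v_0 \circ \dist(\cdot, v)$ is radial, which is impossible for an eigenfunction on a sector that is not a full disc (one checks the Neumann condition on the sides fails, or invokes Lemma \ref{lem:Cauchy-vanish}-type uniqueness); (3) compute the leading term of $\nabla u$ from the $J_{k_0\pi/(2\alpha)}$ term — since $J_\nu(t) \sim \text{const}\cdot t^\nu$ as $t \to 0$, this term contributes a homogeneous vector field of degree $k_0\pi/(2\alpha) - 1 > -1$ that dominates; (4) conclude that $\crit(u)$ near $v$ is contained in finitely many rays emanating from $v$, together with the point $v$ itself — so any accumulation of critical points at $v$ would produce infinitely many critical points along one such ray, i.e. a one-dimensional piece of $\crit(u)$, hence (by the analyticity/graph structure and Theorem \ref{thm:crit-structure-higher}) a critical hypersurface through a boundary side or the interior; (5) rule this out by Proposition \ref{coro:lem-2nd-no-hypersurface} after the reflection argument exactly as in the proof of Theorem \ref{thm:rectangle-interior} — an interior critical arc is forbidden, an exterior one is forbidden by the reflection symmetry, and an arc lying along a side puts us in the situation of Proposition \ref{prop:face-crit-rect}, which would already give a rectangle but in any case contradicts the arc passing through the vertex $v$ (a vertex is not a smooth point of a side).

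The main obstacle I expect is step (3)–(4): controlling the critical set from the \emph{leading} term of the expansion requires ruling out degenerate cancellation. One must argue that near $v$ the higher-order terms $k > k_0$ cannot conspire with the leading term to create a positive-dimensional critical set off the "obvious" rays; the clean way is to pass to the variable $s = \sqrt{\lambda}\rho$ and rescale, showing $\rho^{1-k_0\pi/(2\alpha)}\nabla u$ converges uniformly (with its first derivatives) on $\{\rho = 1\}$ to the nonvanishing-except-at-finitely-many-points leading field as $\rho \to 0$, so that zeros of $\nabla u$ near $v$ are trapped in thin cones around finitely many rays. Handling the irrational-angle case, where reflection does not close up, needs a little care: there one works directly with the Neumann boundary expansion on the sector rather than with a full punctured-disc extension, but the angular eigenfunctions and the Bessel asymptotics are unchanged, so the argument goes through verbatim. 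A secondary subtlety is the genuinely irrational $\pi/\alpha$ case for the reflection step in (5), which is why it is cleanest to phrase the final contradiction as: a critical arc limiting to $v$ would, by Theorem \ref{thm:crit-structure-higher} applied on $\Omega$ (the reflected domain, where this works when $\pi/\alpha \in \mathbb{Q}$) or directly on $P$ together with the boundary regularity, be part of a critical hypersurface, contradicting Proposition \ref{coro:lem-2nd-no-hypersurface}.
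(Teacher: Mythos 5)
Your proposal takes a genuinely different route from the paper, and it has a real gap at the hard step. The paper's proof does \emph{not} argue toward a critical arc. Instead, it invokes Corollary~5.3 of \cite{Annals} (which says that for a simply connected polygon, at least one of the first two Fourier--Bessel coefficients $c_0,c_1$ at a vertex is nonzero), and derives a contradiction by showing that if critical points accumulate at the vertex then in fact $c_0=c_1=0$. This coefficient-by-coefficient argument is essential; you never invoke anything of that sort, and your approach has no substitute for it.

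The central gap is in your step (3)--(4). You write the expansion with ``first nonzero coefficient $c_{k_0}$ with $k_0\ge 1$'' and treat that single term as governing $\nabla u$, but you are dropping the radial term $c_0 J_0(\sqrt{\lambda}\rho)$. This term contributes a nonzero quantity of order $\rho^1$ to $\partial_\rho u$. If the first $k\geq 2$ with $c_k\neq 0$ has $k\nu = 2$ (with $\nu=\pi/\beta$; this forces $k=3$, $\nu = 2/3$, $\beta=3\pi/2$, a perfectly admissible reflex vertex angle), then the $c_0$ term and the $c_k$ term produce contributions to $\partial_\rho u$ of exactly the same order $\rho$, and there is no dominant ``leading homogeneous vector field.'' That is precisely the degenerate case the paper spends the second half of its proof on: there it passes to the substitution $r = s^3$, proves uniform convergence of the resulting series via a Bessel-monotonicity estimate and the Cauchy--Schwarz inequality, and concludes that the rescaled function $f$ is analytic on a closed rectangle including $s=0$, so it has only finitely many critical points. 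Your proposed ``rescale and show the rescaled gradient converges uniformly'' is flagged as the expected difficulty, but you neither identify the specific resonance $k\nu=2$ where it fails nor supply the convergence argument that would be needed.

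Two smaller problems. First, the claim in step (2) that ``all $c_k=0$ for $k\geq 1$'' is impossible because ``the Neumann condition on the sides fails'' is incorrect: a radial function $c_0 J_0(\sqrt{\lambda}\rho)$ has $\partial_\theta u\equiv 0$, which is \emph{exactly} the Neumann condition on a straight side through the vertex. (That case is in fact harmless, but for a different reason: then $\partial_\rho u = -c_0\sqrt{\lambda} J_1(\sqrt{\lambda}\rho) \neq 0$ for small $\rho>0$.) Second, the final step (4)--(5) does not close: showing critical points are confined to thin cones around finitely many rays does not put them \emph{on} a ray, and even infinitely many critical points on a single ray accumulating at $\rho=0$ does not give an analytic arc in $\crit(u)$, because $u$ is not smooth at the vertex (the Fourier--Bessel terms behave like $\rho^{k\nu}$ with $\nu=\pi/\beta$ generically irrational). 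Indeed the whole reason Lemma~\ref{lem:no-vertex-limit} is needed in the proof of Theorem~\ref{thm:rectangle-interior} is that the ``analyticity $\Rightarrow$ locally finite graph $\Rightarrow$ arc'' argument works away from the vertices but breaks down there, so appealing to it in your step (4) is circular.
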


\begin{proof}
Suppose to the contrary that a sequence of critical points converges to a vertex.  
By applying a planar isometry to the polygon, we may assume that the vertex is 
located at the origin and that the intersection of $P$ with a disc neighborhood
of the origin has the form 
$\{r e^{i \theta} \in \Cbb\, :\, 0 < \theta < \beta,\, 0 < r <  r_0  \}$
where $\beta \neq \pi$ is the angle of the vertex.

The Neumann eigenfunction $u$ has the Fourier-Bessel expansion 
\begin{equation}  
\label{eqn:Bessel-expansion-II}
u \left(r e^{i\theta} \right)~ 
=~ 
\sum_{k=0}^\infty\, c_n \cdot r^{k \cdot \nu} \cdot g_{k \cdot\nu}\left(r^2\right) \cdot
 \cos \left(k \cdot \nu \cdot \theta \right).
\end{equation}
where $\nu:= \pi/\beta$, $g_{\nu}(r^2):= r^{-\nu} \cdot J_{\nu}(\sqrt{\lambda} \cdot r)$,
and $J_{\nu}$ is the standard Bessel function (see e.g. \S 4 \cite{Annals}). From the series expansion for $J_{\nu}$ 
\cite{Lebedev}, one finds that the function $g_{\nu}$
is entire and $\partial_r^k g_{\nu}(0) \neq 0$ for $k \in \Nbb$.

Because $P$ is simply connected, Corollary 5.3 in \cite{Annals} 
implies that either $c_0$ or $c_1$ is nonzero. Hence we will derive 
a contradiction by showing that both $c_0$ and $c_1$ equal zero.

Differentiation of (\ref{eqn:Bessel-expansion-II}) 
with respect to $\theta$ gives
\begin{equation}
\label{eqn:diff-theta}
\partial_{\theta} u \left(r \cdot e^{i\theta}\right)~
=~
-\nu \cdot r^{\nu} \cdot \sin(\nu \cdot \theta) \cdot 
\left( c_1 \cdot g_{\nu}(r^2) ~
+~
O(r^{\nu}) \right). 
\end{equation}
Let $r_n e^{i \theta_n}$ denote the sequence of critical points 
that converges to the origin. 
Then from (\ref{eqn:diff-theta}) we have $c_1 \cdot g_{\nu}(r_n^2)= O(r_n^{\nu})$.
Thus, since $\nu>0$ and $g_{\nu}(0)\neq 0$, it follows that $c_1=0$.

Let $k$ be the smallest integer greater then 1 
such that $c_k \neq 0$. Since $c_1=0$, differentiation  of (\ref{eqn:Bessel-expansion-II}) 
with respect to $r$ gives 
\begin{equation}
\label{eqn:r-derivative}
\partial_r u\left( r \cdot e^{i \theta}\right)~
=~
c_0 \cdot 2r \cdot g_0'(r^2) ~
+~
c_k \cdot k\nu \cdot r^{k\nu-1} 
\cdot g_{k\nu}(r^2) \cdot\cos(k\nu \theta)~
+~
O \left(r^{k\nu+1} \right)~
+~
O \left(r^{(k+1)\nu-1} \right).
\end{equation}
If $re^{i\theta}$ is a critical point, then the 
left hand side vanishes. As $r$ tends to zero, 
one of the first two terms on the right dominates.
If $k \nu -1 > 1$, then 
since $r_n e^{i \theta_n}$ is a critical point 
and $g_0'(0) \neq 0$ we find
that $c_0 \cdot 2r_n \cdot g_{0}'(r_n^2) = o(r_n)$ and 
hence $c_0 =0$. If $k \nu -1 < 1$, then 
we find that $c_k \cdot \cos(k \nu \theta_n)= o(1)$.
Since $c_k \neq 0$, we have $\cos(k \nu \theta_n) \to 0$,
and hence $|\sin(k \nu \theta_n)| \to 1$. 
On the other hand, since
\begin{equation}
\label{eqn:diff-theta2}
\partial_{\theta} u \left(r \cdot e^{i\theta}\right)~
=~
- c_k \cdot \nu \cdot r^{k\nu} \cdot g_{k\nu}(r^2)  \cdot \sin(\nu k \theta) 
+~
O \left(r^{(k+1)\nu} \right). 
\end{equation}
we find that either $c_k =0$ or $\sin(k \nu\theta_n) \to 0$, a contradiction in 
either case. 

If $k \nu -1 =1$, then since $k \geq 2 $ and $0 <\beta < 2 \pi$ with 
$\beta \neq \pi$, we find that $k=3$ and  $\nu = 2/3$ ($\beta = 3 \pi/2$).
If we define
\begin{equation}  
\label{eqn:Bessel-expansion-II-third}
f(s, t)~ 
=~ 
\sum_{k=0}^\infty\, c_k \cdot s^{2k} 
\cdot g_{ \frac{2k}{3}}\left(s^6\right) \cdot
 \cos \left( k \cdot t \right)
\end{equation}
then from (\ref{eqn:Bessel-expansion-II}) we find that
$u(r \cdot e^{i\theta})= f\left(r^{\frac{1}{3}}, \frac{2}{3} \theta\right)$. 
It is known that for $\mu>0$, the Bessel function $J_{\mu}$ 
is nonnegative and increasing on the interval 
$[0, \mu]$.\footnote{See, for example, \S 15.3 \cite{Watson}.} 
It follows that for each $k >0$, the function 
$s \mapsto s^{2k} \cdot g_{\frac{2k}{3}}(s^6)$ 
is nonnegative and increasing on the interval 
$[0, (2/3 \sqrt{\lambda})^{\frac{1}{3}}]$.
Since $u$ is real-analytic on $\Omega$, there
exists $s_0 < (2/3 \sqrt{\lambda})^{\frac{1}{3}}$, 
such that $t \mapsto f(s_0,t)$ is 
continuously differentiable on $[0, \pi]$. In particular, 
$\partial_t f(s_0,t)$ belongs to $L^2([0, \pi))$.
Hence by Bessel's inequalilty, the sum
\[
\sum_{k=0}^\infty~ k^2 \cdot |c_k|^2 \cdot s_0^{4k} 
\cdot \left|g_{\frac{2k}{3}}\left(s_0^6\right) \right|^2 
\]
is finite. Thus, since $\sum_k k^{-2}$ is finite, it follows 
from the Cauchy-Schwarz inequality that
\[
\sum_{k=0}^\infty~ |c_k| \cdot s_0^{2k} 
\cdot \left| g_{\frac{2k}{3}}\left(s_0^6\right) \right| 
\]
is finite. Since for $k>0$ the function
$s \mapsto s^{2k} \cdot g_{\frac{2k}{3}}(s^6)$ 
is nonnegative and increasing on the interval 
$[0, s_0]$, we find that the series in (\ref{eqn:Bessel-expansion-II-third})
is uniformly convergent on $[0,s_0]$, and thus by symmetry it is uniformly
convergent on $[-s_0,s_0]$.
It follows that $f$ is analytic on $[-s_0, s_0] \times [0,\pi]$.
In particular, $f$ has at most finitely many critical points 
on $[0, s_0] \times [0,\pi]$.  Therefore $u$ does not have 
a sequence of critical points converging to the vertex. 
\end{proof}


{\it Addresses:}\\
Department of Mathematics, Indiana University, Bloomington, IN, USA\\
School of Mathematics, Tata Institute of Fundamental Research, Mumbai, India.

{\it Emails:}\\
cjudge2@gmail.com\\
sugatam@math.tifr.res.in

\end{document}